\newtheorem{theorem}{Theorem}[section]
\newtheorem{corollary}{Corollary}
\newtheorem{definition}[theorem]{Definition}
\newtheorem{lemma}{Lemma}[section]
\newtheorem{proposition}{Proposition}[section]
\newenvironment{proof}[1][Proof]{\noindent\textbf{#1.} }{\ \rule{0.5em}{0.5em}}
\begin{document}

\title{{\Large AUTOMORPHIC EQUIVALENCE OF LINEAR ALGEBRAS.}}
\author{{\Large A.Tsurkov} \\
Institute of Mathematics and Statistics.\\
University S\~{a}o Paulo. \\
Rua do Mat\~{a}o, 1010 \\
Cidade Universit\'{a}ria \\
S\~{a}o Paulo - SP - Brasil - CEP 05508-090 \\
arkady.tsurkov@gmail.com}
\maketitle

\begin{abstract}
This research is motivated by universal algebraic geometry. We consider in
universal algebraic geometry the some variety of universal algebras $\Theta $
and algebras $H\in \Theta $ from this variety. One of the central question
of the theory is the following: When do two algebras have the same geometry?
What does it mean that the two algebras have the same geometry? The notion
of geometric equivalence of algebras gives a sort of answer to this
question. Algebras $H_{1}$\ and $H_{2}$\ are called geometrically equivalent
if and only if the $H_{1}$-closed sets coincide with the $H_{2}$-closed
sets. The notion of automorphic equivalence is a generalization of the first
notion. Algebras $H_{1}$\ and $H_{2}$\ are called automorphicaly equivalent
if and only if the $H_{1}$-closed sets coincide with the $H_{2}$-closed sets
after some "changing of coordinates".

We can detect the difference between geometric and automorphic equivalence
of algebras of the variety $\Theta $ by researching of the automorphisms of
the category $\Theta ^{0}$ of the finitely generated free algebras of the
variety $\Theta $. By \cite{PlotkinSame} the automorphic equivalence of
algebras provided by inner automorphism degenerated to the geometric
equivalence. So the various differences between geometric and automorphic
equivalence of algebras can be found in the variety $\Theta $ if the factor
group $\mathfrak{A/Y}$ is big. Hear $\mathfrak{A}$ is the group of all
automorphisms of the category\textit{\ }$\Theta ^{0}$, $\mathfrak{Y}$ is a
normal subgroup of all inner automorphisms of the category\textit{\ }$\Theta
^{0}$.

In \cite{PlotkinZhitAutCat} the variety of all Lie algebras and the variety
of all associative algebras over the infinite field $k$ were studied. If the
field $k$ has not nontrivial automorphisms then group $\mathfrak{A/Y}$ in
the first case is trivial and in the second case has order $2$. We consider
in this paper the variety of all linear algebras over the infinite field $k$%
. We prove that group $\mathfrak{A/Y}$ is isomorphic to the group $\left(
U\left( k\mathbf{S}_{\mathbf{2}}\right) \mathfrak{/}U\left( k\left\{
e\right\} \right) \right) \mathfrak{\leftthreetimes }\mathrm{Aut}k$, where $%
\mathbf{S}_{\mathbf{2}}$ is the symmetric group of the set which has $2$
elements, $U\left( k\mathbf{S}_{\mathbf{2}}\right) $ is the group of all
invertible elements of the group algebra $k\mathbf{S}_{\mathbf{2}}$, $e\in 
\mathbf{S}_{\mathbf{2}}$, $U\left( k\left\{ e\right\} \right) $ is a group
of all invertible elements of the subalgebra $k\left\{ e\right\} $, $\mathrm{%
Aut}k$ is the group of all automorphisms of the field $k$.

So even the field $k$ has not nontrivial automorphisms the group $\mathfrak{%
A/Y}$ is infinite. This kind of result is obtained for the first time.

The example of two linear algebras which are automorphically equivalent but
not\textbf{\ }geometrically equivalent is presented in the last section of
this paper. This kind of example is also obtained for the first time.
\end{abstract}

\section{Introduction.\label{intro}}

\setcounter{equation}{0}

In the first two sections we consider some variety $\Theta $ of one-sorted
algebras of the signature $\Omega $. Denote by $X_{0}=\left\{
x_{1},x_{2},\ldots ,x_{n},\ldots \right\} $ a countable set of symbols, and
by $\mathfrak{F}\left( X_{0}\right) $ the set of all finite subsets of $%
X_{0} $. We will consider the category $\Theta ^{0}$, whose objects are all
free algebras $F\left( X\right) $ of the variety $\Theta $ generated by
finite subsets $X\in \mathfrak{F}\left( X_{0}\right) $. Morphisms of the
category $\Theta ^{0}$ are homomorphisms of free algebras.

We denote some time $F\left( X\right) =F\left( x_{1},x_{2},\ldots
,x_{n}\right) $ if $X=\left\{ x_{1},x_{2},\ldots ,x_{n}\right\} $ and even $%
F\left( X\right) =F\left( x\right) $ if $X$ has only one element.

We assume that our variety $\Theta $ possesses the IBN property: for free
algebras $F\left( X\right) ,F\left( Y\right) \in \Theta $ we have $F\left(
X\right) \cong F\left( Y\right) $ if and only if $\left\vert X\right\vert
=\left\vert Y\right\vert $. In this case we have \cite[Theorem 2]%
{PlotkinZhitAutCat} this decomposition%
\begin{equation}
\mathfrak{A=YS}.  \label{decomp}
\end{equation}%
of the group $\mathfrak{A}$ of all automorphisms of the category\textit{\ }$%
\Theta ^{0}$. Hear $\mathfrak{Y}$ is a group of all inner automorphisms of
the category\textit{\ }$\Theta ^{0}$ and $\mathfrak{S}$ is a group of all
strongly stable automorphisms of the category\textit{\ }$\Theta ^{0}$.

\begin{definition}
\label{inner}An automorphism $\Upsilon $ of a category $\mathfrak{K}$ is 
\textbf{inner}, if it is isomorphic as a functor to the identity
automorphism of the category $\mathfrak{K}$.
\end{definition}

This means that for every $A\in \mathrm{Ob}\mathfrak{K}$ there exists an
isomorphism $s_{A}^{\Upsilon }:A\rightarrow \Upsilon \left( A\right) $ such
that for every $\alpha \in \mathrm{Mor}_{\mathfrak{K}}\left( A,B\right) $
the diagram%
\[
\begin{array}{ccc}
A & \overrightarrow{s_{A}^{\Upsilon }} & \Upsilon \left( A\right) \\ 
\downarrow \alpha &  & \Upsilon \left( \alpha \right) \downarrow \\ 
B & \underrightarrow{s_{B}^{\Upsilon }} & \Upsilon \left( B\right)%
\end{array}%
\]%
\noindent commutes.

\begin{definition}
\label{str_stab_aut}\textbf{\hspace{-0.08in}. }\textit{An automorphism $\Phi 
$ of the category }$\Theta ^{0}$\textit{\ is called \textbf{strongly stable}
if it satisfies the conditions:}

\begin{enumerate}
\item[A1)] $\Phi $\textit{\ preserves all objects of }$\Theta ^{0}$\textit{,}

\item[A2)] \textit{there exists a system of bijections }$\left\{ s_{F}^{\Phi
}:F\rightarrow F\mid F\in \mathrm{Ob}\Theta ^{0}\right\} $\textit{\ such
that }$\Phi $\textit{\ acts on the morphisms $\alpha :D\rightarrow F$ of }$%
\Theta ^{0}$\textit{\ by this way: }%
\begin{equation}
\Phi \left( \alpha \right) =s_{F}^{\Phi }\alpha \left( s_{D}^{\Phi }\right)
^{-1},  \label{biject_action}
\end{equation}

\item[A3)] $s_{F}^{\Phi }\mid _{X}=id_{X},$ \textit{\ for every free algebra}
$F=F\left( X\right) $.
\end{enumerate}
\end{definition}

The subgroup $\mathfrak{Y}$ is a normal in $\mathfrak{A}$. We will calculate
the factor group $\mathfrak{A/Y\cong S/S\cap Y}$. This calculation is very
important for universal algebraic geometry.

All definitions of the basic notions of the universal algebraic geometry can
be found, for example, in \cite{PlotkinVarCat}, \cite{PlotkinNotions} and 
\cite{PlotkinSame}. In universal algebraic geometry we consider a "set of
equations" $T\subset F\times F$ in some finitely generated free algebra $F$
of the arbitrary variety of universal algebras $\Theta $ and we "resolve"
these equations in $\mathrm{Hom}\left( F,H\right) $, where $H\in \Theta $.
The set $\mathrm{Hom}\left( F,H\right) $ serves as an "affine space over the
algebra $H$". Denote by $T_{H}^{\prime }$ the set $\left\{ \mu \in \mathrm{%
Hom}\left( F,H\right) \mid T\subset \ker \mu \right\} $. This is the set of
all solutions of the set of equations $T$. For every set of "points" $R$ of
the affine space $\mathrm{Hom}\left( F,H\right) $ we consider a congruence
of equations defined by this set: $R_{H}^{\prime }=\bigcap\limits_{\mu \in
R}\ker \mu $. For every set of equations $T$ we consider its algebraic
closure $T_{H}^{\prime \prime }$ in respect to the algebra $H$. A set $%
T\subset F\times F$ is called $H$-closed if $T=T_{H}^{\prime \prime }$. An $%
H $-closed set is always a congruence.

\begin{definition}
Algebras $H_{1},H_{2}\in \Theta $ are \textbf{geometrically equivalent} if
and only if for every $X\in \mathfrak{F}\left( X_{0}\right) $ and every $%
T\subset F\left( X\right) \times F\left( X\right) $ fulfills $%
T_{H_{1}}^{\prime \prime }=T_{H_{2}}^{\prime \prime }$.
\end{definition}

Denote the family of all $H$-closed congruences in $F$ by $Cl_{H}(F)$. We
can consider the category $C_{\Theta }\left( H\right) $ of the \textit{%
coordinate algebras} connected with the algebra $H\in \Theta $. Objects of
this category are quotient algebras $F\left( X\right) /T$, where $X\in 
\mathfrak{F}\left( X_{0}\right) $, $T\in Cl_{H}(F\left( X\right) )$.
Morphisms of this category are homomorphisms of algebras.

\begin{definition}
\label{automorphic_equivalence}Let $Id\left( H,X\right)
=\bigcap\limits_{\varphi \in \mathrm{Hom}\left( F\left( X\right) ,H\right)
}\ker \varphi $ be the minimal $H$-closed congruence in $\ F\left( X\right) $%
. Algebras $H_{1},H_{2}\in \Theta $ are \textbf{automorphically equivalent}
if and only if there exists a pair $\left( \Phi ,\Psi \right) ,$ where $\Phi
:\Theta ^{0}\rightarrow \Theta ^{0}$ is an automorphism,\textit{\ }$\Psi
:C_{\Theta }\left( H_{1}\right) \rightarrow $\textit{\ }$C_{\Theta }\left(
H_{2}\right) $ is an isomorphism subject to conditions:
\end{definition}

\begin{enumerate}
\item[A.] $\Psi \left( F\left( X\right) /Id\left( H_{1},X\right) \right)
=F\left( Y\right) /Id\left( H_{2},Y\right) $\textit{, where }$\Phi \left(
F\left( X\right) \right) =F\left( Y\right) $\textit{,}

\item[B.] $\Psi \left( F\left( X\right) /T\right) =F\left( Y\right) /%
\widetilde{T}$\textit{, where }$T\in Cl_{H_{1}}(F\left( X\right) )$\textit{, 
}$\widetilde{T}\in Cl_{H_{2}}(F\left( Y\right) )$\textit{,}

\item[C.] $\Psi $\textit{\ takes the natural epimorphism }$\overline{\tau }%
:F\left( X\right) /Id\left( H_{1},X\right) \rightarrow F\left( X\right) /T$%
\textit{\ \ to the natural epimorphism }$\Psi \left( \overline{\tau }\right)
:F\left( Y\right) /Id\left( H_{2},Y\right) \rightarrow F\left( Y\right) /%
\widetilde{T}$\textit{.}
\end{enumerate}

Note that if such a pair $\left( \Phi ,\Psi \right) $ exists, then $\Psi $
is uniquely defined by $\Phi $.

We can say, in certain sense, that automorphic equivalence of algebras is a
coinciding of the structure of closed sets after some "changing of
coordinates" provided by automorphism $\Phi $.

Algebras $H_{1}$\ and $H_{2}$\ are geometrically equivalent if and only if
an inner automorphism $\Phi :\Theta ^{0}\rightarrow \Theta ^{0}$\ provides
the automorphic equivalence of algebras $H_{1}$\ and $H_{2}$. So, only
strongly stable automorphism $\Phi $ can provide us automorphic equivalence
of algebras which not coincides with geometric equivalence of algebras.
Therefore, in some sense, difference from the automorphic equivalence to the
geometric equivalence is measured by the factor group $\mathfrak{A/Y\cong
S/S\cap Y}$.

\section{Verbal operations and strongly stable automorphisms.\label%
{operations}}

\setcounter{equation}{0}

For every word $w=w\left( x_{1},\ldots ,x_{k}\right) \in F\left( X\right) $,
where $F\left( X\right) \in \mathrm{Ob}\Theta ^{0}$, $X=\left\{ x_{1},\ldots
,x_{k}\right\} $ and for every algebra $H\in \Theta $ we can define a $k$%
-ary operation $w_{H}^{\ast }$ on $H$ by 
\[
w_{H}^{\ast }\left( h_{1},\ldots ,h_{k}\right) =w\left( h_{1},\ldots
,h_{k}\right) =\gamma _{h}\left( w\left( x_{1},\ldots ,x_{k}\right) \right)
, 
\]%
where $\gamma _{h}$ is a homomorphism $F\left( X\right) \ni x_{i}\rightarrow
\gamma _{h}\left( x_{i}\right) =h_{i}\in H$, $1\leq i\leq k$. This operation
we call the \textbf{verbal operation} induced on the algebra $H$ by the word 
$w\left( x_{1},\ldots ,x_{k}\right) \in F\left( X\right) $. {A system of
words }$W=\left\{ w_{i}\mid i\in I\right\} $ such that $w_{i}\in F\left(
X_{i}\right) $ {, }$X_{i}=\left\{ x_{1},\ldots ,x_{k_{i}}\right\} ,$ {\
determines a system of }$k_{i}${-ary operations }$\left( w_{i}\right)
_{H}^{\ast }$ on $H$. Denote the set $H$ with the system of these operation
by $H_{W}^{\ast }$.

We have a correspondence between strongly stable automorphisms and {systems
of words which define }the verbal operation and fulfill some conditions.
This correspondence explained in \cite{PlotkinZhitAutCat} and \cite%
{TsurkovAutomEquiv}:\ We denote the signature of our variety $\Theta $ by $%
\Omega $, by $k_{\omega }$ we denote the arity of $\omega $ for every $%
\omega \in \Omega $. We suppose that we have the system of words $W=\left\{
w_{\omega }\mid \omega \in \Omega \right\} $ satisfies the conditions:

\begin{enumerate}
\item[Op1)] $w_{\omega }\left( x_{1},\ldots ,x_{k_{\omega }}\right) \in
F\left( X_{\omega }\right) $, where $X_{\omega }=\left\{ x_{1},\ldots
,x_{k_{\omega }}\right\} $;

\item[Op2)] for every $F=F\left( X\right) \in \mathrm{Ob}\Theta ^{0}$ there
exists an isomorphism $\sigma _{F}:F\rightarrow F_{W}^{\ast }$ such that $%
\sigma _{F}\mid _{X}=id_{X}$.
\end{enumerate}

$F_{W}^{\ast }\in \Theta $ so isomorphisms $\sigma _{F}$ are defined
uniquely by the system of words $W$.

The set $S=\left\{ \sigma _{F}:F\rightarrow F\mid F\in \mathrm{Ob}\Theta
^{0}\right\} $ is a system of bijections which satisfies the conditions:

\begin{enumerate}
\item[B1)] for every homomorphism $\alpha :A\rightarrow B\in \mathrm{Mor}%
\Theta ^{0}$ the mappings $\sigma _{B}\alpha \sigma _{A}^{-1}$ and $\sigma
_{B}^{-1}\alpha \sigma _{A}$ are homomorphisms;

\item[B2)] $\sigma _{F}\mid _{X}=id_{X}$ for every free algebra $F\in 
\mathrm{Ob}\Theta ^{0}$.
\end{enumerate}

So we can define the strongly stable automorphism\textit{\ }by this system
of bijections. This automorphism preserves all objects of $\Theta ^{0}$ and
acts on morphism of $\Theta ^{0}$ by formula (\ref{biject_action}), where $%
s_{F}^{\Phi }=$ $\sigma _{F}$.

Vice versa if we have a strongly stable automorphism $\Phi $ of the category 
$\Theta ^{0}$ then its system of bijections $S=\left\{ s_{F}^{\Phi
}:F\rightarrow F\mid F\in \mathrm{Ob}\Theta ^{0}\right\} $ defined uniquely.
Really, if $F\in \mathrm{Ob}\Theta ^{0}$ and $f\in F$ then%
\begin{equation}
s_{F}^{\Phi }\left( f\right) =s_{F}^{\Phi }\alpha \left( x\right) =\left(
s_{F}^{\Phi }\alpha \left( s_{D}^{\Phi }\right) ^{-1}\right) \left( x\right)
=\left( \Phi \left( \alpha \right) \right) \left( x\right) ,
\label{autom_bijections}
\end{equation}%
where $D=F\left( x\right) $ - $1$-generated free linear algebra - and $%
\alpha :D\rightarrow F$ homomorphism such that $\alpha \left( x\right) =f$.
Obviously that this system of bijections $S=\left\{ s_{F}^{\Phi
}:F\rightarrow F\mid F\in \mathrm{Ob}\Theta ^{0}\right\} $ fulfills
conditions B1 and B2 with $\sigma _{F}=s_{F}^{\Phi }$.

If we have a system of bijections $S=\left\{ \sigma _{F}:F\rightarrow F\mid
F\in \mathrm{Ob}\Theta ^{0}\right\} $ which fulfills conditions B1 and B2
than we can define the system of words $W=\left\{ w_{\omega }\mid \omega \in
\Omega \right\} $ satisfies the conditions Op1 and Op2 by formula%
\begin{equation}
w_{\omega }\left( x_{1},\ldots ,x_{k_{\omega }}\right) =\sigma _{F_{\omega
}}\left( \omega \left( \left( x_{1},\ldots ,x_{k_{\omega }}\right) \right)
\right) \in F_{\omega },  \label{der_veb_opr}
\end{equation}%
where $F_{\omega }=F\left( X_{\omega }\right) $.

By formulas (\ref{autom_bijections}) and (\ref{der_veb_opr}) we can check
that there are

\begin{enumerate}
\item one to one and onto correspondence between strongly stable
automorphisms of the category $\Theta ^{0}$ and systems of bijections
satisfied the conditions B1 and B2

\item one to one and onto correspondence between systems of bijections
satisfied the conditions B1 and B2 and systems of words satisfied the
conditions Op1 and Op2.
\end{enumerate}

So we can find a strongly stable automorphism $\Phi $ of the category $%
\Theta ^{0}$ by finding a system of words which fulfills conditions Op1 and
Op2.

\section{Verbal operations in linear algebras.\label%
{operations_in_linear_alg}}

\setcounter{equation}{0}

From now on, we consider the variety $\Theta $ of all linear algebras over
infinite field $k$. We consider linear algebras as one-sorted universal
algebras, i. e., multiplication by scalar we consider as $1$-ary operation
for every $\lambda \in k$: $H\ni h\rightarrow \lambda h\in H$ where $H\in
\Theta $. Hence the signature $\Omega $ of algebras of our variety contains
these operations: $0$-ary operation $0$; $\left\vert k\right\vert $ $1$-ary
operations of multiplications by scalars; $1$-ary operation $-:h\rightarrow
-h$, where $h\in H$, $H\in \Theta $; $2$-ary operation $\cdot $ and $2$-ary
operation $+$. We will finding the system of words $W=\left\{ w_{\omega
}\mid \omega \in \Omega \right\} $ satisfies the conditions Op1 and Op2. We
denote the words corresponding to these operations by $w_{0}$, $w_{\lambda }$
for all $\lambda \in k$, $w_{-}$, $w_{\cdot }$, $w_{+}$.

For arbitrary $F\left( X\right) \in \mathrm{Ob}\Theta ^{0}$ we denote $%
F\left( X\right) =\bigoplus\limits_{i=1}^{\infty }F_{i}$ the decomposition
to the linear spaces of elements which are homogeneous according the sum of
degrees of generators from the set $X$. We also denote the two-sides ideals $%
\bigoplus\limits_{i=j}^{\infty }F_{i}=F^{j}$. From now on, the word "ideal"
means two sided ideal of linear algebra.

We denote the group of all automorphisms of the field $k$ by $\mathrm{Aut}k$.

Our variety $\Theta $ possesses the IBN property, because $\left\vert
X\right\vert =\dim F/F^{2}$ fulfills for all free algebras $F=F\left(
X\right) \in \Theta $. So we have the decomposition (\ref{decomp}) for group
of all automorphisms of the category $\Theta ^{0}$.

Now we need to prove one technical fact about $1$-generated free linear
algebra $F\left( x\right) $.

\begin{lemma}
\label{monomials}Let $\left\{ u_{1},\ldots ,u_{r}\right\} $ is the set of
all monomials of degree $n$ in $F\left( x\right) $ (basis of $F_{n}$), $%
\left\{ v_{1},\ldots ,v_{t}\right\} $ is the set of all monomials of degree $%
m$ in $F\left( x\right) $ (basis of $F_{m}$), $\varphi $ is an arbitrary
function from $\left\{ 1,\ldots ,n\right\} $ to $\left\{ 1,\ldots ,t\right\} 
$. Denote by $\varphi \left( u_{l}\right) $ the monomial which is a results
of substitution into monomial $u_{l}$ ($1\leq l\leq r$) instead $j$-th from
left entry of $x$ the monomial $v_{\varphi \left( j\right) }$ ($1\leq j\leq
n $). All these monomials are distinct, i. e., $\varphi _{1}\left(
u_{l_{1}}\right) =\varphi _{2}\left( u_{l_{2}}\right) $ if and only if $%
\varphi _{1}=\varphi _{2}$ and $u_{l_{1}}=u_{l_{2}}$, where $\varphi
_{1},\varphi _{2}:\left\{ 1,\ldots ,n\right\} \rightarrow \left\{ 1,\ldots
,t\right\} $, $u_{l_{1}},u_{l_{2}}\in \left\{ u_{1},\ldots ,u_{r}\right\} $.
\end{lemma}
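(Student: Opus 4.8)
The plan is to work entirely inside the free nonassociative (magma) algebra $F(x)$, whose underlying vector space has a basis consisting of all nonassociative monomials in the single letter $x$. Such a monomial is completely described by a planar binary tree together with the obvious labelling of all its leaves by $x$; the degree of the monomial is the number of leaves. So a degree-$n$ monomial $u_l$ is a binary tree $\tau$ with $n$ leaves, and its leaves come with a canonical left-to-right order $1,\dots,n$. First I would make this bracketing/tree description precise and fix the convention that "the $j$-th from left entry of $x$" means the $j$-th leaf of $\tau$ in the canonical order. With $v_{\varphi(j)}$ a degree-$m$ monomial (a tree with $m$ leaves) for each $j$, the object $\varphi(u_l)$ is the tree obtained by grafting, onto the $j$-th leaf of $\tau$, a copy of the tree representing $v_{\varphi(j)}$. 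This is again a planar binary tree, now with $nm$ leaves, hence a monomial of degree $nm$ in $F(x)$.

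The key step is a reconstruction argument: from the tree $T=\varphi(u_l)$ I want to recover $\tau$ (equivalently $u_l$) and the function $\varphi$ (equivalently the list $v_{\varphi(1)},\dots,v_{\varphi(n)}$) uniquely. The point is that $T$ has $nm$ leaves, and its leaves partition, reading left to right, into $n$ consecutive blocks of size $m$: the $j$-th block is exactly the leaf set of the grafted copy of $v_{\varphi(j)}$. I would argue by induction on $n$ (the number of leaves of $\tau$). If $n=1$ then $\tau$ is the single leaf and $T=v_{\varphi(1)}$, so everything is determined. If $n\ge 2$, then $\tau$ has a root with a left subtree $\tau_L$ having $p$ leaves and a right subtree $\tau_R$ having $q$ leaves, $p+q=n$, $p,q\ge 1$. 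Correspondingly $T$ splits at its root into $T_L$ with $pm$ leaves and $T_R$ with $qm$ leaves. Since every $v_i$ has the fixed degree $m$, the number of leaves of $T_L$ forces $p=(\text{leaves of }T_L)/m$ — this is where I need the hypothesis that all the $v$'s have the \emph{same} degree $m$, and that $m\ge 1$ so that $p$ and $q$ are genuinely determined. Then $T_L$ is itself of the form $\varphi_L(u)$ for the monomial $u$ of degree $p$ coming from $\tau_L$ and the restriction of $\varphi$ to the first $p$ arguments, and likewise for $T_R$; by the induction hypothesis $\tau_L,\varphi|_{\{1,\dots,p\}}$ and $\tau_R,\varphi|_{\{p+1,\dots,n\}}$ are recovered, hence so are $\tau$ and $\varphi$. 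This yields the "only if" direction; the "if" direction is immediate since substitution is a well-defined operation on monomials.

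I expect the main obstacle to be purely bookkeeping: setting up the tree/leaf formalism so that "substitute $v_{\varphi(j)}$ into the $j$-th leaf" is unambiguous, and then tracking how the left-to-right leaf order of $T$ decomposes into the $n$ blocks of size $m$ in a way that survives the recursion at the root. The genuinely load-bearing facts are trivial once the formalism is in place: that grafting trees at distinct leaves produces distinct trees, that the leaf count is multiplicative, and that a single fixed block size $m$ lets one read off the arity splitting $n=p+q$ from the leaf counts of the two root subtrees of $T$. No deep structure of $F(x)$ is needed beyond the fact that its monomials are freely described by labelled binary trees; I would cite this standard description of the free nonassociative algebra and keep the induction short.
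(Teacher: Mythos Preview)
Your proposal is correct and follows essentially the same route as the paper's proof: both argue by induction on $n$, split the monomial at its root into a left factor of degree $p$ (or $c$) and a right factor of degree $n-p$, observe that after substitution the left factor has degree $pm$, and use the fixed block size $m$ to force the two splits to agree before invoking the induction hypothesis on each factor. The only difference is cosmetic---you phrase it as a reconstruction of $(\tau,\varphi)$ from the grafted tree $T$, while the paper directly compares two putatively equal substitutions $\varphi_1(u_{l_1})=\varphi_2(u_{l_2})$---but the load-bearing step (leaf count $pm$ determines $p$) and the induction structure are identical.
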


\begin{proof}
We will prove this lemma by induction by $n$ - degree of monomials from $%
\left\{ u_{1},\ldots ,u_{r}\right\} $. The claim of the lemma is trivial for 
$n=1$. We assume that the claim of the lemma is proved for monomials which
have degree $<n$. We suppose that $\varphi _{1}\left( u_{l_{1}}\right)
=\varphi _{2}\left( u_{l_{2}}\right) $, where $\deg u_{l_{1}}=\deg
u_{l_{2}}=n>1$, $\varphi _{1},\varphi _{2}:\left\{ 1,\ldots ,n\right\}
\rightarrow \left\{ 1,\ldots ,t\right\} $. $u_{l_{i}}=u_{l_{i}}^{(1)}\cdot
u_{l_{i}}^{(2)}$, where $i=1,2$. We denote $\deg u_{l_{i}}^{(1)}=c_{i}$. $%
1\leq c_{i}<n$ for $i=1,2$. For $i=1,2$ we have $\varphi _{i}\left(
u_{l_{i}}\right) =\varphi _{i}^{(1)}\left( u_{l_{i}}^{(1)}\right) \cdot
\varphi _{i}^{(2)}\left( u_{l_{i}}^{(2)}\right) $, where $\varphi
_{i}^{(1)}:\left\{ 1,\ldots ,c_{i}\right\} \rightarrow \left\{ 1,\ldots
,t\right\} $, $\varphi _{i}^{(2)}:\left\{ 1,\ldots ,n-c_{i}\right\}
\rightarrow \left\{ 1,\ldots ,t\right\} $, $\varphi _{i}^{(1)}\left(
j\right) =\varphi _{i}\left( j\right) $ for $1\leq j\leq c_{i}$, $\varphi
_{i}^{(2)}\left( j\right) =\varphi _{i}\left( c_{i}+j\right) $ for $1\leq
j\leq n-c_{1}$. $\varphi _{1}\left( u_{l_{1}}\right) =\varphi _{2}\left(
u_{l_{2}}\right) $ if and only if $\varphi _{1}^{(1)}\left(
u_{l_{1}}^{(1)}\right) =\varphi _{2}^{(1)}\left( u_{l_{2}}^{(1)}\right) $
and $\varphi _{1}^{(2)}\left( u_{l_{1}}^{(2)}\right) =\varphi
_{2}^{(2)}\left( u_{l_{2}}^{(2)}\right) $. If $c_{1}\neq c_{2}$ then $\deg
\varphi _{1}^{(1)}\left( u_{l_{1}}^{(1)}\right) =c_{1}m\neq \deg \varphi
_{2}^{(1)}\left( u_{l_{2}}^{(1)}\right) =c_{2}m$, hence $\varphi
_{1}^{(1)}\left( u_{l_{1}}^{(1)}\right) \neq \varphi _{2}^{(1)}\left(
u_{l_{2}}^{(1)}\right) $ and $\varphi _{1}\left( u_{l_{1}}\right) \neq
\varphi _{2}\left( u_{l_{2}}\right) $. So $c_{1}=c_{2}$ and, by our
assumption, $\varphi _{1}^{(1)}=\varphi _{2}^{(1)}$, $%
u_{l_{1}}^{(1)}=u_{l_{2}}^{(1)}$, $\varphi _{1}^{(2)}=\varphi _{2}^{(2)}$, $%
u_{l_{1}}^{(2)}=u_{l_{2}}^{(2)}$. Therefore $\varphi _{1}=\varphi _{2}$ and $%
u_{l_{1}}=u_{l_{2}}$.
\end{proof}

\begin{corollary}
\label{substitute}Let $f\left( x\right) ,g\left( x\right) \in F\left(
X\right) $. $f\left( g\left( x\right) \right) $ is a result of substitution
of $g\left( x\right) $ in $f\left( x\right) $ instead $x$. $f\left( g\left(
x\right) \right) \in F_{1}$ if and only if $f\left( x\right) ,g\left(
x\right) \in F_{1}$.
\end{corollary}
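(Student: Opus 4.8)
The plan is to reduce the statement to a single degree computation: for nonzero $f,g\in F(x)$ I will show that $f(g(x))$ has top degree exactly $(\deg f)(\deg g)$, where $\deg h$ denotes the largest $i$ with the homogeneous component $h_i\neq 0$. Granting this, the equivalence is immediate. The ``if'' direction is the trivial observation that if $f=\alpha x$ and $g=\beta x$ with $\alpha,\beta\in k$ then $f(g(x))=\alpha\beta x\in F_{1}$; and for ``only if'', $f(g(x))\in F_{1}$ forces $(\deg f)(\deg g)\le 1$, hence $\deg f=\deg g=1$, i.e. $f,g\in F_{1}$. The degenerate case where $f$ or $g$ vanishes is separate and straightforward.

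The core step is to identify the top homogeneous component of $f(g(x))$. Write $f=f_{1}+\cdots +f_{N}$ and $g=g_{1}+\cdots +g_{M}$ with $f_{i},g_{j}$ homogeneous of degree $i$ and $j$ respectively, and $f_{N}\neq 0\neq g_{M}$. Substituting $g$ for $x$ in a monomial of $f_{n}$ and expanding by bilinearity of the product, a term in which the $n$ occurrences of $x$ are replaced by summands of $g_{m_{1}},\ldots ,g_{m_{n}}$ has degree $m_{1}+\cdots +m_{n}$; this is at most $nM\le NM$, with equality forcing $n=N$ and every slot receiving a summand of $g_{M}$. Hence the degree-$NM$ component of $f(g(x))$ is precisely $f_{N}(g_{M})$, the element obtained by substituting $g_{M}$ into $f_{N}$, and the problem is reduced to showing $f_{N}(g_{M})\neq 0$.

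This last point is exactly where Lemma~\ref{monomials} enters. Writing $f_{N}=\sum_{l}c_{l}u_{l}$ and $g_{M}=\sum_{s}d_{s}v_{s}$ as linear combinations of distinct monomials ($u_{l}$ of degree $N$, $v_{s}$ of degree $M$), distributivity gives
\[
f_{N}(g_{M})=\sum_{l}\sum_{\varphi }c_{l}\,d_{\varphi (1)}\cdots d_{\varphi (N)}\;\varphi (u_{l}),
\]
where $\varphi$ runs over all functions from the $N$ positions of $x$ in the monomials to the index set of the $v_{s}$, and $\varphi(u_{l})$ is the substituted monomial of Lemma~\ref{monomials}. By that lemma the monomials $\varphi(u_{l})$ are pairwise distinct as $(\varphi ,u_{l})$ varies, so there is no cancellation among these terms; choosing $l_{0}$ with $c_{l_{0}}\neq 0$, $s_{0}$ with $d_{s_{0}}\neq 0$, and the constant function $\varphi_{0}\equiv s_{0}$, the coefficient of $\varphi_{0}(u_{l_{0}})$ equals $c_{l_{0}}d_{s_{0}}^{\,N}\neq 0$ because $k$ is a field. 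Therefore $f_{N}(g_{M})\neq 0$, which finishes the computation.

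The only real subtlety I anticipate is the bookkeeping in the non-associative, non-commutative setting — tracking the degrees produced by inserting the various homogeneous pieces $g_{m}$ into a fixed (bracketed) monomial of $f$ — but once one records that degrees simply add over the occurrences of $x$, the non-cancellation supplied by Lemma~\ref{monomials} does all the work. No input is needed beyond that lemma and the absence of zero divisors in $k$.
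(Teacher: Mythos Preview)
Your proposal is correct and follows essentially the same route as the paper: isolate the top homogeneous piece $f_{N}(g_{M})$, expand it via Lemma~\ref{monomials} to see that the substituted monomials $\varphi(u_{l})$ are pairwise distinct, and then exhibit a nonzero coefficient $c_{l_{0}}d_{s_{0}}^{\,N}$ by taking the constant function $\varphi$. The only cosmetic difference is that you package the argument as a degree formula $\deg f(g(x))=(\deg f)(\deg g)$ while the paper phrases it as a proof by contradiction; one small caution is that your remark that the case $f=0$ or $g=0$ is ``straightforward'' glosses over the fact that the biconditional as stated actually fails there (e.g.\ $f=0$, $g=x^{2}$), so you should note, as the paper implicitly does, that the claim is meant for nonzero $f,g$.
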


\begin{proof}
We write $f\left( x\right) $ and $g\left( x\right) $ as sum of its
homogeneous components: $f\left( x\right) =f_{1}\left( x\right) +f_{2}\left(
x\right) +\ldots +f_{n}\left( x\right) $, $g\left( x\right) =g_{1}\left(
x\right) +g_{2}\left( x\right) +\ldots +g_{m}\left( x\right) $, $f_{i}\left(
x\right) ,g_{i}\left( x\right) \in F_{i}$. We assume that $n>1$ or $m>1$, $%
f_{n}\left( x\right) \neq 0$ and $g_{m}\left( x\right) \neq 0$. $f\left(
g\left( x\right) \right) =f_{1}\left( g\left( x\right) \right) +f_{2}\left(
g\left( x\right) \right) +\ldots +f_{n}\left( g\left( x\right) \right) $.
Addenda of the maximal possible degree of $x$, which can appear in $f\left(
g\left( x\right) \right) $, i. e., addenda of degree $nm$ can appear in $%
f_{n}\left( g\left( x\right) \right) $. They coincide with addenda of $%
f_{n}\left( g_{m}\left( x\right) \right) $. Denote $f_{n}\left( x\right)
=\lambda _{1}u_{1}+\ldots +\lambda _{r}u_{r}$, $g_{m}\left( x\right) =\mu
_{1}v_{1}+\ldots +\mu _{t}v_{t}$, where $\left\{ u_{1},\ldots ,u_{r}\right\} 
$ is the set of all monomials of degree $n$ in $F\left( x\right) $, $\left\{
v_{1},\ldots ,v_{t}\right\} $ is the set of all monomials of degree $m$ in $%
F\left( x\right) $, $\lambda _{i},\mu _{j}\in k$. Not all $\left\{ \lambda
_{1},\ldots ,\lambda _{r}\right\} $ and not all $\left\{ \mu _{1},\ldots
,\mu _{t}\right\} $ are equal to $0$ by our assumption. $f_{n}\left(
g_{m}\left( x\right) \right) =\lambda _{1}u_{1}\left( g_{m}\left( x\right)
\right) +\ldots +\lambda _{r}u_{r}\left( g_{m}\left( x\right) \right) $. If
we open the brackets in $u_{l}\left( g_{m}\left( x\right) \right)
=u_{l}\left( \mu _{1}v_{1}+\ldots +\mu _{t}v_{t}\right) $ ($1\leq l\leq r$),
we obtain addenda, which are results of substitution into monomial $u_{l}$
instead all entry of $x$ some monomial from $\mu _{1}v_{1},\ldots ,\mu
_{t}v_{t}$ in all possible options. We can say more formal: for every
function $\varphi :\left\{ 1,\ldots ,n\right\} \rightarrow \left\{ 1,\ldots
,t\right\} $ we obtain an addendum which is a results of substitution into
monomial $u_{l}$ instead $j$-th from left entry of $x$ the monomial $\mu
_{\varphi \left( j\right) }v_{\varphi \left( j\right) }$ ($1\leq j\leq n$).
Therefore all addenda, which we obtain after the opening of the brackets in $%
f_{n}\left( g_{m}\left( x\right) \right) $, distinct from the monomials
discussed in \textbf{Lemma \ref{monomials}} only by coefficients. All these
addenda have degree $nm>1$, because $n>1$ or $m>1$. So addenda of $%
f_{n}\left( g_{m}\left( x\right) \right) $ can not cancel one another by 
\textbf{Lemma \ref{monomials}}. These addenda can not be canceled by other
addenda $f\left( g\left( x\right) \right) $, because all other addenda have
degree $<nm$. Therefore all these addenda equal to $0$, because $f\left(
g\left( x\right) \right) \in F_{1}$. For $l\in \left\{ 1,\ldots ,r\right\} $
and $j\in \left\{ 1,\ldots ,t\right\} $ we take the addendum which is a
results of substitution into monomial $\lambda _{l}u_{l}$ instead all
entries of $x$ the monomial $\mu _{j}v_{j}$. The coefficient of this
addendum is $\lambda _{l}\mu _{j}^{n}=0$. So $\lambda _{l}\mu _{j}=0$ for
all $l\in \left\{ 1,\ldots ,r\right\} $ and all $j\in \left\{ 1,\ldots
,t\right\} $. It contradicts the fact that $f_{n}\left( x\right) \neq 0$ and 
$g_{m}\left( x\right) \neq 0$.
\end{proof}

\setcounter{corollary}{0}

\begin{theorem}
\label{words}The system of words%
\begin{equation}
W=\left\{ w_{0},w_{\lambda }\left( \lambda \in k\right)
,w_{-},w_{+},w_{\cdot }\right\}  \label{words_list}
\end{equation}%
satisfies the conditions Op1 and Op2 if and only if $w_{0}=0$, $w_{\lambda
}=\varphi \left( \lambda \right) x_{1}$, $w_{-}=-x_{1}$, $w_{+}=x_{1}+x_{2}$%
, $w_{\cdot }=ax_{1}x_{2}+bx_{2}x_{1}$, where $\varphi $ is an automorphism
of the field $k$, $a,b\in k$, $a\neq \pm b$.
\end{theorem}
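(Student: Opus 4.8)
The plan is to prove both implications, with almost all the work in the forward direction, and to use throughout that by Op2 each set $F=F(X)$ carries the structure of a linear algebra $F_{W}^{\ast}\in\Theta$ for which $\sigma_{F}:F\rightarrow F_{W}^{\ast}$ is an isomorphism fixing $X$ pointwise; in particular $F_{W}^{\ast}$ satisfies every identity of linear algebras and $F_{W}^{\ast}\cong F$. Since $F(\emptyset)$ is the zero algebra, $w_{0}=0$. Working in $F(x_{1})$, the module identities $1\cdot_{W}^{\ast}f=f$, $0\cdot_{W}^{\ast}f=0$ and $(-1)\cdot_{W}^{\ast}f=-_{W}^{\ast}f$ become relations among the words; the decisive point is that $\lambda\cdot_{W}^{\ast}(\lambda^{-1}\cdot_{W}^{\ast}x_{1})=x_{1}$ and $-_{W}^{\ast}(-_{W}^{\ast}x_{1})=x_{1}$ say exactly that $w_{\lambda}(w_{\lambda^{-1}}(x_{1}))=x_{1}\in F_{1}$ and $w_{-}(w_{-}(x_{1}))=x_{1}\in F_{1}$, so by Corollary \ref{substitute} $w_{\lambda},w_{-}\in F_{1}$, i.e. $w_{\lambda}=\varphi(\lambda)x_{1}$ for a map $\varphi:k\rightarrow k$ with $\varphi(1)=1$ and $\varphi(\lambda)\neq0$ for $\lambda\neq0$, and $w_{-}=cx_{1}$. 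Multiplicativity of $\varphi$ is read off from $(\lambda\mu)\cdot_{W}^{\ast}f=\lambda\cdot_{W}^{\ast}(\mu\cdot_{W}^{\ast}f)$. The family $\{\sigma_{F}^{-1}\}$ again satisfies B1 and B2, so the same Corollary argument applied to it shows $\sigma_{F(x_{1})}^{-1}$ maps $kx_{1}$ into itself; hence $\sigma_{F(x_{1})}(kx_{1})=kx_{1}$, $\varphi$ is onto, and $\varphi$ is a bijection of $k$.

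Next I would determine $w_{+}$. Setting $x_{2}=0$ (as $0$ is the $+_{W}^{\ast}$-identity) shows the $x_{1}$-only part of $w_{+}(x_{1},x_{2})$ equals $x_{1}$, and symmetrically for $x_{2}$, so $w_{+}=x_{1}+x_{2}+R$ where every monomial of $R$ has positive degree in both variables. The inverse-element identity $w_{+}(x_{1},w_{-}(x_{1}))=0$ forces the degree-$1$ part $(1+c)x_{1}$ of $w_{+}(x_{1},cx_{1})$ to vanish, so $c=-1$ and $w_{-}=-x_{1}$. To eliminate $R$, I would compute $\sigma_{F(x_{1},x_{2})}$ on $\mu(x_{1}+\lambda x_{2})$ in two ways: directly it is $\varphi(\mu)x_{1}+\varphi(\mu\lambda)x_{2}+R(\varphi(\mu)x_{1},\varphi(\mu\lambda)x_{2})$, whereas applying $w_{\mu}$ to $\sigma_{F(x_{1},x_{2})}(x_{1}+\lambda x_{2})$ gives $\varphi(\mu)\bigl(x_{1}+\varphi(\lambda)x_{2}+R(x_{1},\varphi(\lambda)x_{2})\bigr)$; multiplicativity and surjectivity of $\varphi$ reduce this to $sR(x_{1},tx_{2})=R(sx_{1},stx_{2})$ for all $s,t\in k$. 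Comparing the bidegree-$(a,b)$ component $R^{(a,b)}$ gives $(s-s^{a+b})R^{(a,b)}=0$ for all $s\in k$, and as $k$ is infinite and $a+b\geq2$ this yields $R^{(a,b)}=0$. Thus $R=0$, $w_{+}=x_{1}+x_{2}$, and additivity of $\varphi$ now follows from $(\lambda+\mu)\cdot_{W}^{\ast}x_{1}=\lambda\cdot_{W}^{\ast}x_{1}+_{W}^{\ast}\mu\cdot_{W}^{\ast}x_{1}$; hence $\varphi\in\mathrm{Aut}\,k$.

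For $w_{\cdot}$: bilinearity of $\cdot_{W}^{\ast}$ together with $+_{W}^{\ast}=+$ and surjectivity of $\varphi$ makes $w_{\cdot}$ an ordinary $k$-bilinear operation, so by an argument like that of Lemma \ref{monomials} (substituting distinct degree-one monomials for repeated occurrences of one variable cannot produce cancellations) $w_{\cdot}(x_{1},x_{2})$ has degree at most $1$ in each of $x_{1},x_{2}$; therefore $w_{\cdot}=px_{1}+qx_{2}+ax_{1}x_{2}+bx_{2}x_{1}$, and $w_{\cdot}(x_{1},0)=w_{\cdot}(0,x_{2})=0$ force $p=q=0$. Finally $a\neq\pm b$: if $a=b$ then $\cdot_{W}^{\ast}$ is commutative, and if $a=-b$ then $f\cdot_{W}^{\ast}f=0$ identically; each of these is an identity of linear algebras that would be inherited by $F(x_{1},x_{2})\cong F_{W}^{\ast}$, contradicting $x_{1}x_{2}\neq x_{2}x_{1}$, respectively $x_{1}x_{1}\neq0$.

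For the converse, one verifies directly, using $\varphi\in\mathrm{Aut}\,k$, that these verbal operations turn $F(X)$ into a linear algebra $F_{W}^{\ast}$; it then remains to show the unique homomorphism $\sigma_{F}:F(X)\rightarrow F_{W}^{\ast}$ with $\sigma_{F}\mid_{X}=id_{X}$ is bijective. Since the verbal operations carry homogeneous elements of the appropriate degrees to homogeneous elements, $\sigma_{F}$ is graded: $\sigma_{F}=\bigoplus_{n}(\sigma_{F})_{n}$, where $(\sigma_{F})_{1}$ is the bijection $\sum\lambda_{i}x_{i}\mapsto\sum\varphi(\lambda_{i})x_{i}$ and $(\sigma_{F})_{i+j}(uv)=a(\sigma_{F})_{i}(u)(\sigma_{F})_{j}(v)+b(\sigma_{F})_{j}(v)(\sigma_{F})_{i}(u)$. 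Using the decomposition $F_{n}=\bigoplus_{i+j=n}F_{i}F_{j}$ with $F_{i}F_{j}\cong F_{i}\otimes F_{j}$, the restriction of $(\sigma_{F})_{n}$ to each block $F_{i}F_{j}\oplus F_{j}F_{i}$ (and to $F_{i}F_{i}$) is built from $(\sigma_{F})_{i}\otimes(\sigma_{F})_{j}$ and the flip, and its determinant is a nonzero power of $a^{2}-b^{2}$ (and of $a+b$, $a-b$ on the diagonal blocks) times a power of $\det\bigl((\sigma_{F})_{i}\otimes(\sigma_{F})_{j}\bigr)$; by induction on $n$ and $a\neq\pm b$ this is nonzero, so $\sigma_{F}$ is an isomorphism and Op1, Op2 hold. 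I expect the main obstacle to be exactly these two graded computations, the vanishing of $R$ and the invertibility of $(\sigma_{F})_{n}$: the ideas are elementary, but one must organise the homogeneous decomposition and the recurrence governed by the matrix with rows $(a,b)$ and $(b,a)$ carefully.
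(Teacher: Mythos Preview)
Your argument is correct and follows essentially the same route as the paper: Corollary~\ref{substitute} forces $w_{\lambda}\in F_{1}$, scalar homogeneity over the infinite field kills the higher-degree parts of $w_{+}$ and $w_{\cdot}$, and the converse rests on the invertibility of $\left(\begin{smallmatrix} a & b\\ b & a\end{smallmatrix}\right)$. Two differences worth noting: your surjectivity of $\varphi$ via the inverse system $\{\sigma_{F}^{-1}\}$ is the same idea as the paper's (which cites \cite[Proposition~4.2]{TsurkovAutomEquiv}) but more self-contained; and for $w_{\cdot}$ you use additivity/polarization where the paper uses the scalar identity $(\lambda\ast x_{1})\times x_{2}=\lambda\ast(x_{1}\times x_{2})$ directly, which is a little cleaner than invoking Lemma~\ref{monomials}.

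One imprecision to fix in the converse: the maps $(\sigma_{F})_{i}$ are $\varphi$-semilinear, not $k$-linear, so speaking of $\det\bigl((\sigma_{F})_{i}\otimes(\sigma_{F})_{j}\bigr)$ is not literally meaningful. The paper sidesteps this by first proving $\sigma_{F}(F_{r})=F_{r}$ via an explicit preimage computation (using the inverse of $\left(\begin{smallmatrix} a & b\\ b & a\end{smallmatrix}\right)$ and $\varphi^{-1}$), and then deducing injectivity from the fact that $\bigoplus_{i\le m}F_{i}$ and $\bigl(\bigoplus_{i\le m}F_{i}\bigr)_{W}^{\ast}$ share the same finite basis. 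Your block argument can be repaired by factoring $(\sigma_{F})_{n}=L_{n}\circ\tau_{n}$, where $\tau_{n}$ applies $\varphi$ to coefficients in the monomial basis (a bijection) and $L_{n}$ is genuinely $k$-linear; then your determinant recursion applies to $L_{n}$.
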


\begin{proof}
Let $W$ (see (\ref{words_list}) ) satisfies the conditions Op1 and Op2.

$w_{0}$ is an element of the $0$-generated free linear algebra. There is
only one element in this algebra: $0$. This is the only one opportunity for $%
w_{0}$.

$w_{\lambda }\in F\left( x\right) $ for every $\lambda \in k$. Denote
multiplications by scalars in $\left( F\left( x\right) \right) _{W}^{\ast }$
by $\ast $, i. e., $\lambda \ast f=w_{\lambda }\left( f\right) $ for every $%
f\in F\left( x\right) $ and every $\lambda \in k$. $\left( F\left( x\right)
\right) _{W}^{\ast }\in \Theta $, therefore, if $\lambda =0$ then $0\ast
x=w_{0}\left( x\right) =0$. If $\lambda \neq 0$ then%
\[
1\ast x=\left( \lambda ^{-1}\lambda \right) \ast x=\lambda ^{-1}\ast \left(
\lambda \ast x\right) =w_{\lambda ^{-1}}\left( w_{\lambda }\left( x\right)
\right) =x. 
\]%
Hence $w_{\lambda }=\varphi \left( \lambda \right) x$ by \textbf{Corollary 1}
from \textbf{Lemma \ref{monomials}}, where $\varphi \left( \lambda \right)
\in k$. We can write $\varphi \left( 0\right) =0$. Also we have that for all 
$\lambda _{1},\lambda _{2}\in k$ fulfills%
\[
\left( \lambda _{1}\lambda _{2}\right) \ast x=\varphi \left( \lambda
_{1}\lambda _{2}\right) x 
\]
and 
\[
\left( \lambda _{1}\lambda _{2}\right) \ast x=\lambda _{1}\ast \left(
\lambda _{2}\ast x\right) =\lambda _{1}\ast \left( \varphi \left( \lambda
_{2}\right) x\right) = 
\]%
\[
\varphi \left( \lambda _{1}\right) \left( \varphi \left( \lambda _{2}\right)
x\right) =\left( \varphi \left( \lambda _{1}\right) \varphi \left( \lambda
_{2}\right) \right) x. 
\]%
So $\varphi \left( \lambda _{1}\right) \varphi \left( \lambda _{2}\right)
=\varphi \left( \lambda _{1}\lambda _{2}\right) $. If $\mu \in k\setminus
\left\{ 0\right\} $, then the $1$-ary operation of multiplication by scalar $%
\mu $ is a verbal operation defined by some word $w_{\mu }^{\ast }\left(
x\right) \in \left( F\left( x\right) \right) _{W}^{\ast }$, written be the
operations defined by system of words $W$ - see \cite[Proposition 4.2]%
{TsurkovAutomEquiv}. Hence, $\mu f=w_{\mu }^{\ast }\left( f\right) $ holds
for every $f\in F\left( x\right) $. Also there is $w_{\mu ^{-1}}^{\ast
}\left( x\right) \in \left( F\left( x\right) \right) _{W}^{\ast }$ such that 
$\mu ^{-1}f=w_{\mu ^{-1}}^{\ast }\left( f\right) $ for every $f\in F\left(
x\right) $. $x=\mu ^{-1}\left( \mu x\right) =w_{\mu ^{-1}}^{\ast }\left(
w_{\mu }^{\ast }\left( x\right) \right) $. There exists by Op2 an
isomorphism $\sigma _{F\left( x\right) }:F\left( x\right) \rightarrow \left(
F\left( x\right) \right) _{W}^{\ast }$ such that $\sigma _{F\left( x\right)
}\left( x\right) =x$. So $\left( F\left( x\right) \right) _{W}^{\ast }$ is
also $1$-generated free linear algebra of $\Theta $ with the free generator $%
x$. Hence there exists a decomposition $\left( F\left( x\right) \right)
_{W}^{\ast }=\bigoplus\limits_{i=1}^{\infty }F_{i}^{\ast }$, where $%
F_{i}^{\ast }$ are linear spaces of elements which are homogeneous according
the degree of $x$ but in respect of operations defined by system of words $W$%
. Therefore $w_{\mu }^{\ast }\left( x\right) =\lambda \ast x$, where $%
\lambda \in k$, by \textbf{Corollary 1 }from \ \textbf{Lemma \ref{monomials}}%
. So $\mu x=\lambda \ast x=\varphi \left( \lambda \right) x$ and $\mu
=\varphi \left( \lambda \right) $, hence $\varphi :k\rightarrow k$ is a
surjection.

$w_{+}\in F\left( x_{1},x_{2}\right) =F$. There exists $n\in 
\mathbb{N}
$, such that%
\[
w_{+}\left( x_{1},x_{2}\right) =p_{1}\left( x_{1},x_{2}\right) +p_{2}\left(
x_{1},x_{2}\right) +\ldots +p_{n}\left( x_{1},x_{2}\right) , 
\]%
where $p_{i}\left( x_{1},x_{2}\right) \in F_{i}$, $1\leq i\leq n$. We have
for every $\lambda \in k$ that 
\[
w_{+}\left( \lambda \ast x_{1},\lambda \ast x_{2}\right) =\lambda \ast
w_{+}\left( x_{1},x_{2}\right) =\varphi \left( \lambda \right) w_{+}\left(
x_{1},x_{2}\right) = 
\]%
\[
\varphi \left( \lambda \right) p_{1}\left( x_{1},x_{2}\right) +\varphi
\left( \lambda \right) p_{2}\left( x_{1},x_{2}\right) +\ldots +\varphi
\left( \lambda \right) p_{n}\left( x_{1},x_{2}\right) 
\]%
and%
\[
w_{+}\left( \lambda \ast x_{1},\lambda \ast x_{2}\right) =p_{1}\left(
\lambda \ast x_{1},\lambda \ast x_{2}\right) +p_{2}\left( \lambda \ast
x_{1},\lambda \ast x_{2}\right) +\ldots +p_{n}\left( \lambda \ast
x_{1},\lambda \ast x_{2}\right) = 
\]%
\[
p_{1}\left( \varphi \left( \lambda \right) x_{1},\varphi \left( \lambda
\right) x_{2}\right) +p_{2}\left( \varphi \left( \lambda \right)
x_{1},\varphi \left( \lambda \right) x_{2}\right) +\ldots +p_{n}\left(
\varphi \left( \lambda \right) x_{1},\varphi \left( \lambda \right)
x_{2}\right) = 
\]%
\[
\varphi \left( \lambda \right) p_{1}\left( x_{1},x_{2}\right) +\left(
\varphi \left( \lambda \right) \right) ^{2}p_{2}\left( x_{1},x_{2}\right)
+\ldots +\left( \varphi \left( \lambda \right) \right) ^{n}p_{n}\left(
x_{1},x_{2}\right) . 
\]%
We can take $\lambda \in k$ such that $\varphi \left( \lambda \right) $ is
not a solution of any equation $x^{i}=x$, where $2\leq i\leq n$. So, $%
p_{i}\left( x_{1},x_{2}\right) =0$ for $2\leq i\leq n$ by equality of the
homogeneous components. Therefore $w_{+}=\alpha x_{1}+\beta x_{2}$, where $%
\alpha ,\beta \in k$. If we denote the operation defined by $w_{+}$ in $%
\left( F\left( x_{1},x_{2}\right) \right) _{W}^{\ast }$ by $\bot $, then $%
x_{1}\bot x_{2}=x_{2}\bot x_{1}$ holds, so $\alpha x_{1}+\beta x_{2}=\alpha
x_{2}+\beta x_{1}$ and $\alpha =\beta $. Also $x_{1}\bot 0=x_{1}$ holds and $%
\alpha x_{1}=x_{1}$, so $\alpha =\beta =1$. Now, by consideration of $%
F\left( x\right) $, we can conclude that for all $\lambda _{1},\lambda
_{2}\in k$ fulfills 
\[
\varphi \left( \lambda _{1}+\lambda _{2}\right) x=\left( \lambda
_{1}+\lambda _{2}\right) \ast x=\lambda _{1}\ast x\bot \lambda _{2}\ast x= 
\]%
\[
\lambda _{1}\ast x+\lambda _{2}\ast x=\varphi \left( \lambda _{1}\right)
x+\varphi \left( \lambda _{2}\right) x=\left( \varphi \left( \lambda
_{1}\right) +\varphi \left( \lambda _{2}\right) \right) x, 
\]%
so $\varphi \left( \lambda _{1}+\lambda _{2}\right) =$ $\varphi \left(
\lambda _{1}\right) +\varphi \left( \lambda _{2}\right) $ and $\varphi $ is
an automorphism of the field $k$.

Its clear now that $w_{-}=-x\in F\left( x\right) $, because%
\[
w_{-}\left( x\right) =-1\ast x=\varphi \left( -1\right) x=\left( -1\right)
x=-x. 
\]

$w_{\cdot }\in F\left( x_{1},x_{2}\right) $. We write $w_{\cdot }$ as sum of
its homogeneous components according the degree of $x_{1}$:%
\[
w_{\cdot }\left( x_{1},x_{2}\right) =p_{0}\left( x_{1},x_{2}\right)
+p_{1}\left( x_{1},x_{2}\right) +p_{2}\left( x_{1},x_{2}\right) +\ldots
+p_{n}\left( x_{1},x_{2}\right) . 
\]%
We denote the operation defined by $w_{\cdot }$ in $\left( F\left(
x_{1},x_{2}\right) \right) _{W}^{\ast }$ by $\times $. So we have for every $%
\lambda \in k$ that%
\[
\left( \lambda \ast x_{1}\right) \times x_{2}=\lambda \ast \left(
x_{1}\times x_{2}\right) =\varphi \left( \lambda \right) w_{\cdot }\left(
x_{1},x_{2}\right) = 
\]%
\[
\varphi \left( \lambda \right) p_{0}\left( x_{1},x_{2}\right) +\varphi
\left( \lambda \right) p_{1}\left( x_{1},x_{2}\right) +\varphi \left(
\lambda \right) p_{2}\left( x_{1},x_{2}\right) +\ldots +\varphi \left(
\lambda \right) p_{n}\left( x_{1},x_{2}\right) . 
\]%
and%
\[
\left( \lambda \ast x_{1}\right) \times x_{2}=w_{\cdot }\left( \varphi
\left( \lambda \right) x_{1},x_{2}\right) = 
\]%
\[
p_{0}\left( \varphi \left( \lambda \right) x_{1},x_{2}\right) +p_{1}\left(
\varphi \left( \lambda \right) x_{1},x_{2}\right) +p_{2}\left( \varphi
\left( \lambda \right) x_{1},x_{2}\right) +\ldots +p_{n}\left( \varphi
\left( \lambda \right) x_{1},x_{2}\right) = 
\]%
\[
p_{0}\left( x_{1},x_{2}\right) +\varphi \left( \lambda \right) p_{1}\left(
x_{1},x_{2}\right) +\left( \varphi \left( \lambda \right) \right)
^{2}p_{2}\left( x_{1},x_{2}\right) +\ldots +\left( \varphi \left( \lambda
\right) \right) ^{n}p_{n}\left( x_{1},x_{2}\right) . 
\]%
We can take, as above, $\lambda \in k$ such that by equality of the
homogeneous components we obtain that $w_{\cdot }\left( x_{1},x_{2}\right)
=p_{1}\left( x_{1},x_{2}\right) $. Now we write $w_{\cdot }\left(
x_{1},x_{2}\right) =p_{1}\left( x_{1},x_{2}\right) $ as sum of its
homogeneous components according the degree of $x_{2}$:%
\[
w_{\cdot }\left( x_{1},x_{2}\right) =r_{0}\left( x_{1},x_{2}\right)
+r_{1}\left( x_{1},x_{2}\right) +r_{2}\left( x_{1},x_{2}\right) +\ldots
+r_{m}\left( x_{1},x_{2}\right) . 
\]%
We have for every $\lambda \in k$ that%
\[
x_{1}\times \left( \lambda \ast x_{2}\right) =\lambda \ast \left(
x_{1}\times x_{2}\right) =\varphi \left( \lambda \right) w_{\cdot }\left(
x_{1},x_{2}\right) = 
\]%
\[
\varphi \left( \lambda \right) r_{0}\left( x_{1},x_{2}\right) +\varphi
\left( \lambda \right) r_{1}\left( x_{1},x_{2}\right) +\varphi \left(
\lambda \right) r_{2}\left( x_{1},x_{2}\right) +\ldots +\varphi \left(
\lambda \right) r_{m}\left( x_{1},x_{2}\right) . 
\]%
and%
\[
x_{1}\times \left( \lambda \ast x_{2}\right) =w_{\cdot }\left( x_{1},\varphi
\left( \lambda \right) x_{2}\right) = 
\]%
\[
r_{0}\left( x_{1},\varphi \left( \lambda \right) x_{2}\right) +r_{1}\left(
x_{1},\varphi \left( \lambda \right) x_{2}\right) +r_{2}\left( x_{1},\varphi
\left( \lambda \right) x_{2}\right) +\ldots +r_{m}\left( x_{1},\varphi
\left( \lambda \right) x_{2}\right) = 
\]%
\[
r_{0}\left( x_{1},x_{2}\right) +\varphi \left( \lambda \right) r_{1}\left(
x_{1},x_{2}\right) +\varphi \left( \lambda \right) ^{2}r_{2}\left(
x_{1},x_{2}\right) +\ldots +\varphi \left( \lambda \right) ^{m}r_{m}\left(
x_{1},x_{2}\right) . 
\]%
And, as above, we can conclude that $w_{\cdot }\left( x_{1},x_{2}\right)
=r_{1}\left( x_{1},x_{2}\right) $ where $r_{1}\left( x_{1},x_{2}\right) $ is
a homogeneous element of $F\left( x_{1},x_{2}\right) $ such that $\deg
_{x_{1}}r_{1}\left( x_{1},x_{2}\right) =1$ and $\deg _{x_{2}}r_{1}\left(
x_{1},x_{2}\right) =1$. Therefore $w_{\cdot }\left( x_{1},x_{2}\right)
=ax_{1}x_{2}+bx_{2}x_{1}$, where $a,b\in k$. If $a=b$ then the operation
defined by $w_{\cdot }\left( x_{1},x_{2}\right) $ is commutative. If\ $a=-b$
then the operation defined by $w_{\cdot }\left( x_{1},x_{2}\right) $ is
anticommutative. The isomorphisms $\sigma _{F}:F\rightarrow F_{W}^{\ast }$,
where $F\in \mathrm{Ob}\Theta ^{0}$ can not exists in both these cases if $F$
is not a $0$-generated free algebra.

Therefore we prove that if the system of words (\ref{words_list}) satisfies
the conditions Op1 and Op2 then $w_{0}=0$, $w_{\lambda }=\varphi \left(
\lambda \right) x_{1}$ for all $\lambda \in k$, $w_{-}=-x_{1}$, $%
w_{+}=x_{1}+x_{2}$, $w_{\cdot }=ax_{1}x_{2}+bx_{2}x_{1}$, where $\varphi $
is an automorphism of the field $k$, $a,b\in k$, $a\neq \pm b$.

Now we must prove that for all $\varphi \in \mathrm{Aut}k$ and all $a,b\in k$
such that $a\neq \pm b$ the system of words (\ref{words_list}) where $%
w_{0}=0 $, $w_{\lambda }=\varphi \left( \lambda \right) x_{1}$ for all $%
\lambda \in k $, $w_{-}=-x_{1}$, $w_{+}=x_{1}+x_{2}$, $w_{\cdot
}=ax_{1}x_{2}+bx_{2}x_{1}$ fulfills condition Op2. It means that we must
build for every $F=F\left( X\right) \in \mathrm{Ob}\Theta ^{0}$ an
isomorphism $\sigma _{F}:F\rightarrow F_{W}^{\ast }$ such that $\sigma
_{F}\mid _{X}=id_{X}$.

We will prove, first of all, that $H_{W}^{\ast }\in \Theta $ for every $H\in
\Theta $. Operations defined by $w_{0}$, $w_{-}$, $w_{+}$ coincide with $0$, 
$-$, $+$. So identities of the variety $\Theta $ (axioms of the linear
algebra) relating to these operations fulfill in $H_{W}^{\ast }$. Hence we
only need to check the axioms that involve the operations defined by $%
w_{\cdot }$ and $w_{\lambda }$ ($\lambda \in k$). As above we denote these
operations by $\times $ and by $\lambda \ast $.%
\[
\lambda \ast \left( x+y\right) =\varphi \left( \lambda \right) \left(
x+y\right) =\varphi \left( \lambda \right) x+\varphi \left( \lambda \right)
y=\lambda \ast x+\lambda \ast y, 
\]%
\[
\left( \lambda \mu \right) \ast x=\varphi \left( \lambda \mu \right)
x=\varphi \left( \lambda \right) \varphi \left( \mu \right) x=\varphi \left(
\lambda \right) \left( \mu \ast x\right) =\lambda \ast \left( \mu \ast
x\right) , 
\]%
\[
\left( \lambda +\mu \right) \ast x=\varphi \left( \lambda +\mu \right)
x=\left( \varphi \left( \lambda \right) +\varphi \left( \mu \right) \right)
x=\varphi \left( \lambda \right) x+\varphi \left( \mu \right) x=\lambda \ast
x+\mu \ast x, 
\]%
\[
1\ast x=\varphi \left( 1\right) x=1x=x, 
\]%
\[
x\times \left( y+z\right) =ax\left( y+z\right) +b\left( y+z\right)
x=axy+axz+byx+bzx=x\times y+x\times z, 
\]%
\[
\left( y+z\right) \times x=a\left( y+z\right) x+bx\left( y+z\right)
=ayx+azx+bxy+bxz=y\times x+z\times x, 
\]%
\[
\lambda \ast \left( x\times y\right) =\varphi \left( \lambda \right) \left(
axy+byx\right) =a\left( \varphi \left( \lambda \right) x\right) y+by\left(
\varphi \left( \lambda \right) x\right) =\left( \varphi \left( \lambda
\right) x\right) \times y= 
\]%
\[
\left( \lambda \ast x\right) \times y=x\times \left( \lambda \ast y\right) 
\]%
fulfills for every $x,y,z\in H$, $\lambda ,\mu \in k$.

Hence there exists a homomorphism $\sigma _{F}:F\rightarrow F_{W}^{\ast }$
such that $\sigma _{F}\mid _{X}=id_{X}$ for every $F=F\left( X\right) \in 
\mathrm{Ob}\Theta ^{0}$. Our goal is to prove that these homomorphisms are
isomorphisms. We will prove by induction by $i$ that 
\begin{equation}
\sigma _{F}\left( F_{i}\right) =F_{i}.  \label{epi_homo}
\end{equation}%
for every $i\in 
\mathbb{N}
$. If $X=\left\{ x_{1},\ldots ,x_{n}\right\} $ then every element of $F_{1}$
has form $\lambda _{1}x_{1}+\ldots +\lambda _{n}x_{n}$, where $\lambda
_{1},\ldots ,\lambda _{n}\in k$.%
\[
\sigma _{F}\left( \lambda _{1}x_{1}+\ldots +\lambda _{n}x_{n}\right)
=\lambda _{1}\ast \sigma _{F}\left( x_{1}\right) +\ldots +\lambda _{n}\ast
\sigma _{F}\left( x_{n}\right) =\varphi \left( \lambda _{1}\right)
x_{1}+\ldots +\varphi \left( \lambda _{n}\right) x_{n}, 
\]%
so $\sigma _{F}\left( F_{1}\right) \subset F_{1}$.%
\[
\sigma _{F}\left( \varphi ^{-1}\left( \lambda _{1}\right) x_{1}+\ldots
+\varphi ^{-1}\left( \lambda _{n}\right) x_{n}\right) =\lambda
_{1}x_{1}+\ldots +\lambda _{n}x_{n}, 
\]%
so $\sigma _{F}\left( F_{1}\right) =F_{1}$.

Let (\ref{epi_homo}) proved for $i$ such that $1\leq i<r$. Every element of $%
F_{r}$ is a linear combination of the monomials of the form $uv$, where $%
u\in F_{i}$, $v\in F_{j}$, $i+j=r$.%
\[
\sigma _{F}\left( uv\right) =\sigma _{F}\left( u\right) \times \sigma
_{F}\left( v\right) =a\sigma _{F}\left( u\right) \sigma _{F}\left( v\right)
+b\sigma _{F}\left( v\right) \sigma _{F}\left( u\right) ,
\]%
so $\sigma _{F}\left( F_{r}\right) \subset F_{r}$, because, by our
assumption, $\sigma _{F}\left( u\right) \in F_{i}$, $\sigma _{F}\left(
v\right) \in F_{j}$. Also, if $u=\sigma _{F}\left( \widetilde{u}\right) $, $%
v=\sigma _{F}\left( \widetilde{v}\right) $, where $\widetilde{u}\in F_{r}$, $%
\widetilde{v}\in F_{t}$, then%
\[
\sigma _{F}\left( \widetilde{u}\widetilde{v}\right) =\sigma _{F}\left( 
\widetilde{u}\right) \times \sigma _{F}\left( \widetilde{v}\right) =u\times
v=auv+bvu
\]%
\[
\sigma _{F}\left( \widetilde{v}\widetilde{u}\right) =\sigma _{F}\left( 
\widetilde{v}\right) \times \sigma _{F}\left( \widetilde{u}\right) =v\times
u=avu+buv=buv+avu,
\]%
fulfills. $a\neq \pm b$, so the matrix $\left( 
\begin{array}{cc}
a & b \\ 
b & a%
\end{array}%
\right) $ is regular, hence there exist $\alpha ,\beta \in k$ such that 
\[
uv=\alpha \sigma _{F}\left( \widetilde{u}\widetilde{v}\right) +\beta \sigma
_{F}\left( \widetilde{v}\widetilde{u}\right) =\sigma _{F}\left( \varphi
^{-1}\left( \alpha \right) \widetilde{u}\widetilde{v}+\varphi ^{-1}\left(
\beta \right) \widetilde{v}\widetilde{u}\right) .
\]%
Therefore $\sigma _{F}\left( F_{r}\right) =F_{r}$. We can conclude that $%
\sigma _{F}$ is an epimorphism.

Now we will prove that $\ker \sigma _{F}=0$. Let $f\in \ker \sigma
_{F}\subset F\left( X\right) $. There exists $m\in 
\mathbb{N}
$ such that $f\in \bigoplus\limits_{i=1}^{m}F_{i}$. $\sigma _{F}\left(
\bigoplus\limits_{i=1}^{m}F_{i}\right) =\bigoplus\limits_{i=1}^{m}F_{i}$ by (%
\ref{epi_homo}). $\sigma _{F}$ is a linear mapping from the linear space $%
\bigoplus\limits_{i=1}^{m}F_{i}$ with the original multiplication by scalars
in $F$ to the $\left( \bigoplus\limits_{i=1}^{m}F_{i}\right) _{W}^{\ast }$ -
the linear space $\bigoplus\limits_{i=1}^{m}F_{i}$ with the multiplication
by scalars which we denote by $\ast $. From formulas $\sum\limits_{i=1}^{k}%
\left( \lambda _{i}\ast e_{i}\right) =\sum\limits_{i=1}^{k}\varphi \left(
\lambda _{i}\right) e_{i}$ and $\sum\limits_{i=1}^{k}\lambda
_{i}e_{i}=\sum\limits_{i=1}^{k}\left( \varphi ^{-1}\left( \lambda
_{i}\right) \ast e_{i}\right) $ we can conclude that if $E$ is a basis of
the linear space $\bigoplus\limits_{i=1}^{m}F_{i}$ then $E$ is a basis of
the linear space $\left( \bigoplus\limits_{i=1}^{m}F_{i}\right) _{W}^{\ast }$%
. So $\dim \bigoplus\limits_{i=1}^{m}F_{i}=\dim \left(
\bigoplus\limits_{i=1}^{m}F_{i}\right) _{W}^{\ast }<\infty $, therefore $%
\ker \left( \sigma _{F}\mid \bigoplus\limits_{i=1}^{m}F_{i}\right) =0$ and $%
f=0$.
\end{proof}

\section{Group $\mathfrak{A/Y}$.\label{group}}

\setcounter{equation}{0}

From now on, $W$ is a system of words (\ref{words_list}) which fulfills
conditions Op1 and Op2.

The decomposition (\ref{decomp}) is not split in general case, i. e. $%
\mathfrak{S\cap Y\neq }\left\{ 1\right\} $ in general case. The strongly
stable automorphism $\Phi $ of the category $\Theta ^{0}$ which corresponds
to the system of words $W$ is inner, by \cite[Lemma 3]{PlotkinZhitAutCat},
if and only if for every $F\in \mathrm{Ob}\Theta ^{0}$ there exists an
isomorphism $c_{F}:F\rightarrow F_{W}^{\ast }$ such that $c_{F}\alpha
=\alpha c_{D}$ fulfills for every $\left( \alpha :D\rightarrow F\right) \in 
\mathrm{Mor}\Theta ^{0}$ (by \cite[Remark 3.1]{TsurkovAutomEquiv} $\alpha $
is also a homomorphism from $D_{W}^{\ast }$ to $F_{W}^{\ast }$).

Hear we need to prove one technical lemma.

\begin{lemma}
\label{F/F^2}If $F=F\left( X\right) \in \mathrm{Ob}\Theta ^{0}$ and $%
c_{F}:F\rightarrow F_{W}^{\ast }$ is an isomorphism then there exists an
isomorphism $c_{i}:F/F^{i}\rightarrow F_{W}^{\ast }/F^{i}$ such that $\chi
_{i}^{\ast }c_{F}=c_{i}\chi _{i}$, where $\chi _{i}:F\rightarrow F/F^{i}$
and $\chi _{i}^{\ast }:F_{W}^{\ast }\rightarrow F_{W}^{\ast }/F^{i}$ are
natural homomorphisms, $i\in 
\mathbb{N}
$.
\end{lemma}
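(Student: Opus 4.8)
The plan is to show that the ideal $F^{i}$ is invariant under $c_{F}$ (both "directions"), so that $c_{F}$ descends to the quotients. First I would observe that, by the proof of Theorem~\ref{words} (the part establishing condition Op2), the isomorphism $c_{F}:F\to F_{W}^{\ast}$ satisfies $c_{F}(F_{j})=F_{j}$ for every $j$ — this is exactly formula (\ref{epi_homo}) applied to $\sigma_{F}=c_{F}$. Actually one must be slightly careful: Lemma~\ref{F/F^2} is stated for an \emph{arbitrary} isomorphism $c_{F}$, not necessarily the canonical $\sigma_{F}$ fixing $X$ pointwise. So the first real step is to argue that \emph{any} isomorphism $c_{F}:F\to F_{W}^{\ast}$ carries $F^{i}$ onto $F^{i}$. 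Here I would use the intrinsic description $F^{2}$ is the ideal generated by all products, equivalently $F/F^{2}$ is the "abelianization killing multiplication", and more generally $F^{i}$ is the $i$-th term of the lower central-type filtration defined purely by the multiplication operation. Since the multiplication in $F_{W}^{\ast}$ is $w_{\cdot}=ax_{1}x_{2}+bx_{2}x_{1}$ with $a\neq\pm b$, an easy induction (already implicit in the regularity-of-$\binom{a\ b}{b\ a}$ argument in Theorem~\ref{words}) shows the ideal of $F_{W}^{\ast}$ generated by $\ast$-products coincides with $F^{2}$ as a set, and inductively the $i$-th power ideal in the $\ast$-structure equals $F^{i}$. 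Any algebra isomorphism must send the $i$-th power ideal to the $i$-th power ideal, so $c_{F}(F^{i})=F^{i}$ and likewise $c_{F}^{-1}(F^{i})=F^{i}$.

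Given this invariance, the construction of $c_{i}$ is formal. The composite $\chi_{i}^{\ast}c_{F}:F\to F_{W}^{\ast}/F^{i}$ is a surjective homomorphism (composite of the isomorphism $c_{F}$ with the natural epimorphism $\chi_{i}^{\ast}$), and its kernel is $c_{F}^{-1}(F^{i})=F^{i}=\ker\chi_{i}$. By the universal property of the quotient $\chi_{i}:F\to F/F^{i}$ there is a unique homomorphism $c_{i}:F/F^{i}\to F_{W}^{\ast}/F^{i}$ with $c_{i}\chi_{i}=\chi_{i}^{\ast}c_{F}$, which is exactly the asserted identity. That $c_{i}$ is surjective is immediate since $\chi_{i}^{\ast}c_{F}$ is; that it is injective follows because $\ker(\chi_{i}^{\ast}c_{F})=\ker\chi_{i}$, so $c_{i}$ is a bijective homomorphism, hence an isomorphism.

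I would present this in two short movements: a lemma-internal paragraph proving $c_{F}(F^{i})=F^{i}$ by induction on $i$ (base case $F^{1}=F$ trivial; inductive step: $F^{i}$ is spanned by products $uv$ with $\deg u+\deg v=i$, and $c_{F}(uv)=c_{F}(u)\times c_{F}(v)=ac_{F}(u)c_{F}(v)+bc_{F}(v)c_{F}(u)\in F^{i}$ using the inductive hypothesis $c_{F}(F_{<i})\subseteq F^{1}$ more precisely the grading version, plus the surjectivity/regularity argument to get equality — this mirrors the computation already done for $\sigma_{F}$ in Theorem~\ref{words}), then the formal factorization paragraph above.

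The main obstacle is the first step — making sure the power-ideal filtration $F\supseteq F^{2}\supseteq F^{3}\supseteq\cdots$ is genuinely intrinsic to the algebra structure (so an arbitrary isomorphism, not the privileged $\sigma_{F}$, must respect it), and handling the $\ast$-structure: one must verify that the ideal of $F_{W}^{\ast}$ generated by $i$-fold $\times$-products is \emph{equal} (not merely contained in) $F^{i}$, which is where $a\neq\pm b$ is used via the invertibility of $\binom{a\ b}{b\ a}$, exactly as in the surjectivity argument of Theorem~\ref{words}. Once that identification is in hand, everything else is the standard first-isomorphism-theorem bookkeeping.
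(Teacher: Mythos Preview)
Your proposal is correct but takes a different route from the paper. You argue directly that the $i$-th power ideal of $F_{W}^{\ast}$ coincides as a set with $F^{i}$ (using the invertibility of $\left(\begin{smallmatrix}a&b\\b&a\end{smallmatrix}\right)$), then invoke that any isomorphism sends $i$-th power ideal to $i$-th power ideal. The paper instead uses a composition trick: it first checks only the easier fact that $F^{i}$ is an ideal of $F_{W}^{\ast}$ (just closure under $\lambda\ast$ and $\times$), and then observes that since the canonical $\sigma_{F}$ already satisfies $\sigma_{F}(F^{i})=F^{i}$ by (\ref{epi_homo}), the composite $c_{F}^{-1}\sigma_{F}:F\to F$ is an automorphism of the \emph{original} algebra $F$, hence automatically preserves $F^{i}$; thus $c_{F}^{-1}(F^{i})=c_{F}^{-1}\sigma_{F}(F^{i})=F^{i}$. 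The paper's route is shorter because it recycles the graded computation already done for $\sigma_{F}$ and reduces the question about the arbitrary $c_{F}$ to a question about an automorphism of $F$ itself; your route is more self-contained and makes the structural reason explicit, at the cost of essentially redoing the surjectivity half of (\ref{epi_homo}). Both finish with the same first-isomorphism-theorem factorization, which the paper leaves implicit.
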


\begin{proof}
If $H\in \Theta $ and $I$ is an ideal of $H$. If $\lambda \in k$, $y\in I$, $%
h\in H$, then $\lambda \ast y=\varphi \left( \lambda \right) y\in I$, $%
y\times h=ayh+bhy\in I$, analogously $h\times y\in I$. Therefore $I$ is an
ideal of $H_{W}^{\ast }$. Hence $F^{i}$ is an ideal of $F_{W}^{\ast }$.

If $\sigma _{F}:F\rightarrow F_{W}^{\ast }$ is an isomorphism such that $%
\sigma _{F}\mid _{X}=id_{X}$, then by (\ref{epi_homo}) we have $%
c_{F}^{-1}\left( F^{i}\right) =c_{F}^{-1}\sigma _{F}\left( F^{i}\right)
=F^{i}$ because $c_{F}^{-1}\sigma _{F}:F\rightarrow F$ is an isomorphism. So 
$c_{F}\left( F^{i}\right) =F^{i}$. It finishes the proof.
\end{proof}

\begin{proposition}
\label{stable_inner}The strongly stable automorphism $\Phi $ which
corresponds to the system of words $W$ is inner if and only if $\varphi
=id_{k}$ and $b=0$.
\end{proposition}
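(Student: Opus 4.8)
The plan is to analyze the condition for innerness quoted just before the statement: $\Phi$ is inner if and only if for every $F\in\mathrm{Ob}\,\Theta^0$ there is an isomorphism $c_F:F\to F_W^\ast$ with $c_F\alpha=\alpha c_D$ for all $\alpha:D\to F$ in $\mathrm{Mor}\,\Theta^0$. The easy direction is sufficiency: if $\varphi=id_k$ and $b=0$, then $w_\cdot=ax_1x_2$ with $a\neq 0$, and the verbal operations differ from the original ones only by rescaling multiplication. I would exhibit $c_F$ explicitly — on each homogeneous component $F_i$ set $c_F$ to be multiplication by a suitable power of $a$ (so that $c_F(uv)=a\,c_F(u)\,c_F(v)$ matches $c_F(u)\times c_F(v)$), say $c_F|_{F_i}=a^{i-1}\cdot(-)$, which is an isomorphism $F\to F_W^\ast$ since $\ast$-scalar multiplication coincides with the original one (as $\varphi=id_k$). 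Commutation $c_F\alpha=\alpha c_D$ is then immediate because $\alpha$ preserves the degree grading and $c_F$ acts by a scalar depending only on the degree.

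For necessity, I would take $D=F(x)$ the $1$-generated free algebra and $F=F(x_1,x_2)$, and first extract information from the $1$-generated case. Applying the commutation square to the homomorphism $\alpha:D\to D$, $x\mapsto\lambda x$, and to $\alpha:D\to F$, $x\mapsto x_1$, forces $c_F$ restricted to the images of these maps to be compatible. The cleanest route: use Lemma \ref{F/F^2}, which gives an induced isomorphism $c_2:F/F^2\to F_W^\ast/F^2$ with $\chi_2^\ast c_F=c_2\chi_2$. On $F/F^2\cong F_1$ the $\ast$-action is $\lambda\ast\bar x_i=\varphi(\lambda)\bar x_i$, while on the source it is the ordinary one; the commutation condition $c_F\alpha=\alpha c_F$ applied to scalar maps $\alpha:x_i\mapsto\lambda x_i$ (viewed in $F(x_1)$ and transported) forces $c_2$ to be a $k$-linear map that simultaneously intertwines ordinary scalar multiplication on the source and $\varphi$-twisted scalar multiplication on the target and commutes with all coordinate projections; a short computation shows this is impossible unless $\varphi=id_k$. (Concretely: $c_2(\lambda\bar x_1)$ must equal both $\alpha(c_2(\bar x_1))$ with $\alpha$ the endomorphism sending $\bar x_1\mapsto\lambda\bar x_1$ in the source, and also a multiple of $\bar x_1$ in the twisted target; comparing the two scalars yields $\varphi(\lambda)=\lambda$ for all $\lambda$.)

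With $\varphi=id_k$ established, $\ast$-scalar multiplication coincides with the original one, so $c_F$ is genuinely $k$-linear and grading-preserving by (the proof of) Lemma \ref{F/F^2} (it maps each $F^i$ to $F^i$, hence maps each $F_i$ to $F_i$). Now I would exploit the multiplication axiom: $c_F(uv)=c_F(u)\times c_F(v)=a\,c_F(u)c_F(v)+b\,c_F(v)c_F(u)$ for $u,v\in F(x_1,x_2)$. Taking $u=x_1$, $v=x_2$ (both in $F_1$, on which $c_F$ is some invertible linear endomorphism $c_F(x_1)=p$, $c_F(x_2)=q$ with $p,q$ a basis of $F_1$ since $c_F$ is an isomorphism and fixes the grading), we get in $F_2$ the identity $c_F(x_1x_2)=a\,pq+b\,qp$, and similarly $c_F(x_2x_1)=a\,qp+b\,pq$. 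Meanwhile, using the commutation with the coordinate swap $\alpha:x_1\leftrightarrow x_2$ and with the projections $x_1\mapsto x_1,x_2\mapsto 0$, one pins down $c_F$ on $F_1$ to be essentially diagonal, reducing to $c_F(x_1)=\gamma x_1$, $c_F(x_2)=\gamma x_2$ for a single scalar $\gamma\neq0$; then $c_F(x_1x_2)=\gamma^2(a\,x_1x_2+b\,x_2x_1)$, which must lie in the image and be consistent with $c_F$ being an isomorphism $F_2\to F_2$ commuting with all morphisms $D\to F$ landing in $F_1$. Feeding in $\alpha:D\to F$, $x\mapsto x_1+x_2$ and expanding $c_F((x_1+x_2)^2)$ two ways forces the bilinear form $(u,v)\mapsto a\,uv+b\,vu$, after conjugation by the scalar $\gamma$, to be realizable as $c_D$-transport of ordinary multiplication, which — since $c_D$ on $F_1=kx$ is just a scalar — is impossible unless the "$vu$" term vanishes, i.e. $b=0$. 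I expect this last step — cleanly ruling out $b\neq 0$ using only the naturality squares rather than an ad hoc dimension count — to be the main obstacle, and I would organize it by working in $F(x_1,x_2)_2$, which is spanned by $x_1^2,x_1x_2,x_2x_1,x_2^2$, and checking that no invertible grading-preserving $c_F$ intertwining all the relevant $\alpha$'s exists when $b\neq0$.
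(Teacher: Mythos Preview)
Your sufficiency construction is wrong. The map $c_F|_{F_i}=a^{i-1}\cdot(-)$ is indeed an isomorphism $F\to F_W^\ast$, but it does \emph{not} commute with arbitrary morphisms: a homomorphism $\alpha:D\to F$ need not preserve the degree grading (e.g.\ $\alpha:F(x)\to F(x)$, $x\mapsto x\cdot x$, sends $D_1$ to $D_2$), so your justification ``$\alpha$ preserves the degree grading'' is simply false, and the commutation fails whenever $a\neq 1$. The paper instead takes the \emph{constant} rescaling $c_F(f)=a^{-1}f$: this is $k$-linear (harmless since $\varphi=id_k$), satisfies $c_F(f_1)\times c_F(f_2)=a(a^{-1}f_1)(a^{-1}f_2)=a^{-1}f_1f_2=c_F(f_1f_2)$, and commutes with every morphism because every algebra homomorphism is $k$-linear.

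In the necessity direction your route to $\varphi=id_k$ is essentially the paper's, but the argument for $b=0$ has a real gap. First, Lemma~\ref{F/F^2} gives $c_F(F^i)=F^i$, which does \emph{not} imply $c_F(F_i)\subset F_i$; you only obtain $c_F(x_i)\equiv\gamma x_i\pmod{F^2}$, not equality. More seriously, the test morphisms you propose---the swap, projections, and $x\mapsto x_1+x_2$---all send generators into $F_1$, and the resulting constraints in degree~$2$ are symmetric under $x_1x_2\leftrightarrow x_2x_1$, so they cannot separate $a$ from $b$. The paper's key move is to use an endomorphism whose value on a generator lies \emph{outside} $F_1$: take $\alpha\in\mathrm{End}\,F(x_1,x_2)$ with $\alpha(x_1)=x_1x_2$, $\alpha(x_2)=0$. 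Then $\alpha(F^2)\subset F^3$, so modulo $F^3$ one has $\alpha c_F(x_1)\equiv\gamma\, x_1x_2$ while $c_F\alpha(x_1)=c_F(x_1x_2)\equiv\gamma^2(a\,x_1x_2+b\,x_2x_1)$; equating these forces $b=0$. Without such a non-linear test morphism your outline cannot close.
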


\begin{proof}
We suppose that strongly stable automorphism $\Phi $ which corresponds to
the system of words $W$ is inner. We assume that $\varphi \neq id_{k}$, i.,
e., there exists $\lambda \in k$ such that $\varphi \left( \lambda \right)
\neq \lambda $. We denote $F=F\left( x\right) $. We take $\alpha \in \mathrm{%
End}F$, such that $\alpha \left( x\right) =\lambda x$. We suppose that $%
c_{F}:F\rightarrow F_{W}^{\ast }$ is an isomorphism. $c_{2}$ is defined as
in the Lemma \ref{F/F^2}, and we by this Lemma we have:%
\[
\chi _{2}^{\ast }c_{F}\left( x\right) =c_{2}\chi _{2}\left( x\right) =\mu
\ast \chi _{2}^{\ast }\left( x\right) =\chi _{2}^{\ast }\left( \mu \ast
x\right) =\chi _{2}^{\ast }\left( \varphi \left( \mu \right) x\right) ,
\]%
where operations in algebra $F_{W}^{\ast }/F^{2}$ we denote by same symbols
as operations in algebra $F_{W}^{\ast }$ and $\mu \in k\setminus \left\{
0\right\} $. Therefore $c_{F}\left( x\right) \equiv \varphi \left( \mu
\right) x\left( \func{mod}F^{2}\right) $. $\alpha \left( F^{2}\right)
\subset F^{2}$ fulfils, so%
\[
\alpha c_{F}\left( x\right) =\alpha \left( \varphi \left( \mu \right)
x+f_{2}\right) \equiv \alpha \left( \varphi \left( \mu \right) x\right)
=\varphi \left( \mu \right) \alpha \left( x\right) =\varphi \left( \mu
\right) \lambda x\left( \func{mod}F^{2}\right) ,
\]%
where $f_{2}\in F^{2}$. 
\[
c_{F}\alpha \left( x\right) =c_{F}\left( \lambda x\right) =\lambda \ast
c_{F}\left( x\right) =\varphi \left( \lambda \right) c_{F}\left( x\right)
\equiv \varphi \left( \lambda \right) \varphi \left( \mu \right) x\left( 
\func{mod}F^{2}\right) .
\]%
$\mu \neq 0$, so $\varphi \left( \mu \right) \neq 0$, $\varphi \left(
\lambda \right) \neq \lambda $ hence $\alpha c_{F}\neq c_{F}\alpha $. This
contradiction proves that $\varphi =id_{k}$.

Now we denote $F=F\left( x_{1},x_{2}\right) \in \mathrm{Ob}\Theta ^{0}$. By
our assumption there exists an isomorphism $c_{F}:F\rightarrow F_{W}^{\ast }$
such that $c_{F}\alpha =\alpha c_{F}$ fulfills for every $\alpha \in \mathrm{%
End}F$. $c_{2}$ is defined as in the Lemma \ref{F/F^2}. $\alpha \left(
F^{2}\right) \subset F^{2}$ so we can define the homomorphism $\widetilde{%
\alpha }:F/F^{2}\rightarrow F/F^{2}$ such that $\widetilde{\alpha }\chi
_{2}=\chi _{2}\alpha $. From $c_{F}\alpha =\alpha c_{F}$ we can conclude $%
c_{2}\widetilde{\alpha }=\widetilde{\alpha }c_{2}$ fulfills. By Lemma \ref%
{F/F^2} $c_{2}$ is a regular linear mapping. We can take the endomorphisms $%
\alpha $ such that $\widetilde{\alpha }$ will be an arbitrary linear mapping
from $k^{2}$ to $k^{2}$. Therefore $c_{2}$ must be a regular linear mapping
from $k^{2}$ to $k^{2}$ which commutate with all linear mappings from $k^{2}$
to $k^{2}$. Hence $c_{2}$ must be a scalar mapping, i.e., 
\[
\chi _{2}^{\ast }c_{F}\left( x_{i}\right) =c_{2}\chi _{2}\left( x_{i}\right)
=\lambda \chi _{2}^{\ast }\left( x_{i}\right) =\chi _{2}^{\ast }\left(
\lambda x_{i}\right) , 
\]%
where $\lambda \in k\setminus \left\{ 0\right\} $, $i=1,2$. Therefore $%
c_{F}\left( x_{i}\right) =\lambda x_{i}+f_{i}$, where $f_{i}\in F^{2}$, $%
i=1,2$. We can remark that now we consider the case when $\varphi =id_{k}$,
hence we need not distinguish between multiplication by scalar in $F$ and $%
F_{W}^{\ast }$.

Now we take $\alpha \in \mathrm{End}F$ such that $\alpha \left( x_{1}\right)
=x_{1}x_{2}$, $\alpha \left( x_{2}\right) =0$. If $u$ is a monomial which
contain only entries of $x_{1}$, then $\deg _{x_{1}}\alpha \left( u\right)
+\deg _{x_{2}}\alpha \left( u\right) =2\deg _{x_{1}}u$. If a monomial $u$
contain at least one entry of $x_{2}$, then $\alpha \left( u\right) =0$.
Hence $\alpha \left( F^{2}\right) \subset F^{3}$. So we have 
\[
c_{F}\alpha \left( x_{1}\right) =c_{F}\left( x_{1}x_{2}\right) =c_{F}\left(
x_{1}\right) \times c_{F}\left( x_{2}\right) =ac_{F}\left( x_{1}\right)
c_{F}\left( x_{2}\right) +bc_{F}\left( x_{2}\right) c_{F}\left( x_{1}\right)
= 
\]%
\[
a\left( \lambda x_{1}+f_{1}\right) \left( \lambda x_{2}+f_{2}\right)
+b\left( \lambda x_{2}+f_{2}\right) \left( \lambda x_{1}+f_{1}\right) \equiv
a\lambda ^{2}x_{1}x_{2}+b\lambda ^{2}x_{2}x_{1}\left( \func{mod}F^{3}\right)
. 
\]%
\[
\alpha c_{F}\left( x_{1}\right) =\alpha \left( \lambda x_{1}+f_{1}\right)
\equiv \lambda x_{1}x_{2}\left( \func{mod}F^{3}\right) . 
\]%
Hence we conclude $b=0$ from $c_{F}\alpha =\alpha c_{F}$.

If $b=0$, i. e., $w_{\cdot }=ax_{1}x_{2}$, $a\neq 0$, then we take $%
c_{F}\left( f\right) =a^{-1}f$ for every $F\in \mathrm{Ob}\Theta ^{0}$ and
every $f\in F$. It is obvious that $c_{F}$ is a regular linear mapping. 
\[
c_{F}\left( f_{1}\right) \times c_{F}\left( f_{2}\right) =ac_{F}\left(
f_{1}\right) c_{F}\left( f_{2}\right) =a\left( a^{-1}f_{1}\right) \left(
a^{-1}f_{2}\right) =a^{-1}f_{1}f_{2}=c_{F}\left( f_{1}f_{2}\right) . 
\]%
for every $f_{1},f_{2}\in F$. So $c_{F}:F\rightarrow F_{W}^{\ast }$ is an
isomorphism. It fulfils%
\[
c_{F}\alpha \left( d\right) =a^{-1}\alpha \left( d\right) =\alpha \left(
a^{-1}d\right) =\alpha c_{F}\left( d\right) 
\]%
for every $\left( \alpha :D\rightarrow F\right) \in \mathrm{Mor}\Theta ^{0}$
and every $d\in D$.
\end{proof}

\begin{proposition}
\label{str_stb_group}The group $\mathfrak{S\cong }G\mathfrak{\leftthreetimes 
}\mathrm{Aut}k$, where $G$ is the group of all regular $2\times 2$ matrices
over field $k$, which have a form $\left( 
\begin{array}{cc}
a & b \\ 
b & a%
\end{array}%
\right) $ and every $\varphi \in \mathrm{Aut}k$ acts on the group $G$ by
this way: $\varphi \left( 
\begin{array}{cc}
a & b \\ 
b & a%
\end{array}%
\right) =\left( 
\begin{array}{cc}
\varphi \left( a\right) & \varphi \left( b\right) \\ 
\varphi \left( b\right) & \varphi \left( a\right)%
\end{array}%
\right) $.
\end{proposition}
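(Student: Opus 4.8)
The plan is to read off the underlying set of $\mathfrak{S}$ from Theorem \ref{words}, and then to identify the group law by composing the associated systems of bijections.

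By the two correspondences recorded in Section \ref{operations}, a strongly stable automorphism $\Phi $ of $\Theta ^{0}$ is the same datum as a system of words $W$ satisfying Op1, Op2; and by Theorem \ref{words} such a $W$ is exactly $\left( w_{0},w_{\lambda },w_{-},w_{+},w_{\cdot }\right) =\left( 0,\varphi \left( \lambda \right) x_{1},-x_{1},x_{1}+x_{2},ax_{1}x_{2}+bx_{2}x_{1}\right) $ for a unique $\varphi \in \mathrm{Aut}k$ and a unique pair $a,b\in k$ with $a\neq \pm b$. Since $\det \left( \begin{array}{cc} a & b \\ b & a\end{array}\right) =a^{2}-b^{2}=\left( a-b\right) \left( a+b\right) $, the condition $a\neq \pm b$ is precisely $M_{\Phi }:=\left( \begin{array}{cc} a & b \\ b & a\end{array}\right) \in G$. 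Hence $\Phi \mapsto \left( M_{\Phi },\varphi _{\Phi }\right) $ (here $\varphi _{\Phi }$ is the field automorphism and $w_{\cdot }=ax_{1}x_{2}+bx_{2}x_{1}$ the product word of $\Phi $) is a bijection from $\mathfrak{S}$ onto the underlying set of $G\leftthreetimes \mathrm{Aut}k$. It remains to check it is a homomorphism.

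For this I would use that composition of strongly stable automorphisms corresponds to composition of bijection systems. If $\Phi _{1},\Phi _{2}\in \mathfrak{S}$ have bijection systems $\left\{ \sigma _{F}^{(1)}\right\} $ and $\left\{ \sigma _{F}^{(2)}\right\} $ — the canonical isomorphisms $\sigma _{F}^{(j)}:F\rightarrow F_{W_{j}}^{\ast }$ fixing the free generators — then a double application of (\ref{biject_action}) yields $\left( \Phi _{1}\Phi _{2}\right) \left( \alpha \right) =\left( \sigma _{F}^{(1)}\sigma _{F}^{(2)}\right) \alpha \left( \sigma _{D}^{(1)}\sigma _{D}^{(2)}\right) ^{-1}$, so the bijection system of $\Phi _{1}\Phi _{2}$ is $\left\{ \sigma _{F}^{(1)}\circ \sigma _{F}^{(2)}\right\} $, and hence by (\ref{der_veb_opr}) its words are obtained by applying $\sigma _{F_{\omega }}^{(1)}\circ \sigma _{F_{\omega }}^{(2)}$ to $\omega \left( x_{1},\ldots ,x_{k_{\omega }}\right) $. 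Now each $\sigma _{F}^{(j)}$ is additive, is $\varphi _{j}$-semilinear (i.e. $\sigma _{F}^{(j)}\left( \lambda f\right) =\varphi _{j}\left( \lambda \right) \sigma _{F}^{(j)}\left( f\right) $), sends $fg$ to $a_{j}\sigma _{F}^{(j)}\left( f\right) \sigma _{F}^{(j)}\left( g\right) +b_{j}\sigma _{F}^{(j)}\left( g\right) \sigma _{F}^{(j)}\left( f\right) $, and fixes generators. Pushing $\lambda x\in F\left( x\right) $ through $\sigma _{F\left( x\right) }^{(2)}$ and then $\sigma _{F\left( x\right) }^{(1)}$ produces the scalar word $\varphi _{1}\left( \varphi _{2}\left( \lambda \right) \right) x$, so $\varphi _{\Phi _{1}\Phi _{2}}=\varphi _{\Phi _{1}}\varphi _{\Phi _{2}}$; pushing $x_{1}x_{2}\in F\left( x_{1},x_{2}\right) $ through $\sigma ^{(2)}$ gives $a_{2}x_{1}x_{2}+b_{2}x_{2}x_{1}$, and then applying $\sigma ^{(1)}$ — using $\varphi _{1}$-semilinearity together with $\sigma ^{(1)}\left( x_{1}x_{2}\right) =a_{1}x_{1}x_{2}+b_{1}x_{2}x_{1}$ and $\sigma ^{(1)}\left( x_{2}x_{1}\right) =a_{1}x_{2}x_{1}+b_{1}x_{1}x_{2}$ — gives the product word whose coefficient vector on $\left( x_{1}x_{2},x_{2}x_{1}\right) $ is $M_{\Phi _{1}}\varphi _{1}\left( M_{\Phi _{2}}\right) $. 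Thus $M_{\Phi _{1}\Phi _{2}}=M_{\Phi _{1}}\cdot \varphi _{1}\left( M_{\Phi _{2}}\right) $ and $\varphi _{\Phi _{1}\Phi _{2}}=\varphi _{\Phi _{1}}\varphi _{\Phi _{2}}$, which is exactly the multiplication in $G\leftthreetimes \mathrm{Aut}k$ for the stated action of $\mathrm{Aut}k$ on $G$. So $\Phi \mapsto \left( M_{\Phi },\varphi _{\Phi }\right) $ is a group isomorphism (and, incidentally, $G$ really is closed under this product, since the image is all of $\mathfrak{S}$).

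I expect the one delicate point to be the last computation: because $\sigma ^{(1)}$ is only $\varphi _{1}$-semilinear, it carries the scalars $a_{2},b_{2}$ produced by $\sigma ^{(2)}$ to $\varphi _{1}\left( a_{2}\right) ,\varphi _{1}\left( b_{2}\right) $, and it is precisely this twist that yields $\varphi _{1}\left( M_{\Phi _{2}}\right) $ and forces the product to be semidirect rather than direct. Everything else is immediate from Theorem \ref{words} and the correspondences of Section \ref{operations}.
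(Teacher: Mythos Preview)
Your proof is correct and follows essentially the same route as the paper's: use the correspondences of Section~\ref{operations} together with Theorem~\ref{words} to set up the bijection, then identify the group law by composing the bijection systems and reading off the resulting words via~(\ref{der_veb_opr}). The only cosmetic difference is that the paper defines the map in the direction $G\leftthreetimes\mathrm{Aut}\,k\to\mathfrak{S}$ and checks $\tau(g_{2}\varphi_{2})\tau(g_{1}\varphi_{1})=\tau(g_{2}\varphi_{2}\cdot g_{1}\varphi_{1})$, whereas you go $\mathfrak{S}\to G\leftthreetimes\mathrm{Aut}\,k$ and compute $M_{\Phi_{1}\Phi_{2}}$; the underlying computation with the semilinear twist is identical.
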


\begin{proof}
We will define the mapping $\tau :G\mathfrak{\leftthreetimes }\mathrm{Aut}%
k\rightarrow \mathfrak{S}$. If $g\varphi \in G\mathfrak{\leftthreetimes }%
\mathrm{Aut}k$, where $g=\left( 
\begin{array}{cc}
a & b \\ 
b & a%
\end{array}%
\right) $, then we define $\tau \left( g\varphi \right) =\Phi \in \mathfrak{S%
}$, where $\Phi $ corresponds to the system of words $W$ with $w_{\lambda
}=\varphi \left( \lambda \right) x_{1}$ for every $\lambda \in k$ and $%
w_{\cdot }=ax_{1}x_{2}+bx_{2}x_{1}$. By Section \ref{operations} and Theorem %
\ref{words} $\tau $ is bijection.

We consider $\tau \left( g_{1}\varphi _{1}\right) =\Phi _{1}$ and $\tau
\left( g_{2}\varphi _{2}\right) =\Phi _{2}$, where $g_{1}\varphi
_{1},g_{2}\varphi _{2}\in G\mathfrak{\leftthreetimes }\mathrm{Aut}k$ and $%
g_{1}=\left( 
\begin{array}{cc}
a_{1} & b_{1} \\ 
b_{1} & a_{1}%
\end{array}%
\right) $, $g_{2}=\left( 
\begin{array}{cc}
a_{2} & b_{2} \\ 
b_{2} & a_{2}%
\end{array}%
\right) $. Both these strongly stable automorphisms preserves all objects of 
$\Theta ^{0}$ and acts on morphisms of $\Theta ^{0}$ by theirs systems of
bijections $\left\{ s_{F}^{\Phi _{i}}:F\rightarrow F\mid F\in \mathrm{Ob}%
\Theta ^{0}\right\} $, for $i=1,2$, according the formula (\ref%
{biject_action}). We have $\Phi _{2}\Phi _{1}\left( \alpha \right)
=s_{F}^{\Phi _{2}}s_{F}^{\Phi _{1}}\alpha \left( s_{D}^{\Phi _{1}}\right)
^{-1}\left( s_{D}^{\Phi _{2}}\right) ^{-1}$ for every $\left( \alpha
:D\rightarrow F\right) \in \mathrm{Mor}\Theta ^{0}$. So strongly stable
automorphism $\Phi _{2}\Phi _{1}=\tau \left( g_{2}\varphi _{2}\right) \tau
\left( g_{1}\varphi _{1}\right) $ preserves all objects of $\Theta ^{0}$ and
acts on morphisms of $\Theta ^{0}$ by system of bijections%
\[
\left\{ s_{F}^{\Phi _{2}}s_{F}^{\Phi _{1}}:F\rightarrow F\mid F\in \mathrm{Ob%
}\Theta ^{0}\right\} . 
\]%
This system of bijections satisfies the conditions B1 and B2, so we can
define the words $w_{\lambda }^{\Phi _{2}\Phi _{1}}$ for every $\lambda \in
k $ and $w_{\cdot }^{\Phi _{2}\Phi _{1}}$ which correspond to the
automorphism $\Phi _{2}\Phi _{1}$ by formula (\ref{der_veb_opr}). The words $%
w_{\lambda }^{\Phi _{i}}(\lambda \in k)$ and $w_{\cdot }^{\Phi _{i}}$ which
correspond to the automorphism $\Phi _{i}$ have forms $w_{\lambda }^{\Phi
_{i}}=\varphi _{i}\left( \lambda \right) x_{1}(\lambda \in k)$ and $w_{\cdot
}^{\Phi _{i}}=a_{i}x_{1}x_{2}+b_{i}x_{2}x_{1}$ for $i=1,2$. So 
\[
w_{\lambda }^{\Phi _{2}\Phi _{1}}=s_{F}^{\Phi _{2}}s_{F}^{\Phi _{1}}\left(
\lambda x_{1}\right) =s_{F}^{\Phi _{2}}\left( w_{\lambda }^{\Phi
_{1}}\right) =s_{F}^{\Phi _{2}}\left( \varphi _{1}\left( \lambda \right)
x_{1}\right) =\varphi _{2}\left( \varphi _{1}\left( \lambda \right) \right)
x_{1}=\left( \varphi _{2}\varphi _{1}\right) \left( \lambda \right) x_{1} 
\]%
for every $\lambda \in k$ and%
\[
w_{\cdot }^{\Phi _{2}\Phi _{1}}=s_{F}^{\Phi _{2}}s_{F}^{\Phi _{1}}\left(
x_{1}x_{2}\right) =s_{F}^{\Phi _{2}}\left( w_{\cdot }^{\Phi _{1}}\right)
=s_{F}^{\Phi _{2}}\left( a_{1}x_{1}x_{2}+b_{1}x_{2}x_{1}\right) = 
\]%
\[
\varphi _{2}\left( a_{1}\right) s_{F}^{\Phi _{2}}\left( x_{1}x_{2}\right)
+\varphi _{2}\left( b_{1}\right) s_{F}^{\Phi _{2}}\left( x_{2}x_{1}\right) = 
\]%
\[
\varphi _{2}\left( a_{1}\right) \left(
a_{2}x_{1}x_{2}+b_{2}x_{2}x_{1}\right) +\varphi _{2}\left( b_{1}\right)
\left( a_{2}x_{2}x_{1}+b_{2}x_{1}x_{2}\right) = 
\]%
\[
\left( \varphi _{2}\left( a_{1}\right) a_{2}+\varphi _{2}\left( b_{1}\right)
b_{2}\right) x_{1}x_{2}+\left( \varphi _{2}\left( a_{1}\right) b_{2}+\varphi
_{2}\left( b_{1}\right) a_{2}\right) x_{2}x_{1}. 
\]%
because $s_{F}^{\Phi _{i}}:F\rightarrow F_{W_{i}}^{\ast }$ is an
isomorphism, $i=1,2$. Hence%
\[
\Phi _{2}\Phi _{1}=\tau \left( g_{2}\varphi _{2}\right) \tau \left(
g_{1}\varphi _{1}\right) =\tau \left( g_{2}\varphi _{2}\left( g_{1}\right)
\varphi _{2}\varphi _{1}\right) =\tau \left( g_{2}\varphi _{2}\cdot
g_{1}\varphi _{1}\right) . 
\]
\end{proof}

\begin{corollary}
\label{intersection}Group $\mathfrak{S\cap Y}$ is isomorphic to the group $%
k^{\ast }I_{2}$ of the regular $2\times 2$ scalar matrices over field $k$.
\end{corollary}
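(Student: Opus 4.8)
The plan is to read off $\mathfrak{S}\cap\mathfrak{Y}$ directly from Proposition \ref{stable_inner} and then transport it through the group isomorphism $\tau$ constructed in Proposition \ref{str_stb_group}. First I would make the tautological observation that, since $\mathfrak{Y}$ is by Definition \ref{inner} the group of all inner automorphisms of $\Theta^{0}$ and (by the decomposition (\ref{decomp})) $\mathfrak{S}$ is the group of all strongly stable automorphisms, an element $\Phi\in\mathfrak{S}$ lies in $\mathfrak{S}\cap\mathfrak{Y}$ if and only if the strongly stable automorphism $\Phi$ is inner. So the only real content is to describe which strongly stable automorphisms are inner and to recognize the resulting set as a subgroup of $G\leftthreetimes\mathrm{Aut}k$.

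Next, writing $\Phi=\tau(g\varphi)$ for $g\varphi\in G\leftthreetimes\mathrm{Aut}k$ with $g=\left(\begin{array}{cc} a & b \\ b & a\end{array}\right)$ regular and $\Phi$ corresponding to the system of words $W$ with $w_{\lambda}=\varphi(\lambda)x_{1}$ and $w_{\cdot}=ax_{1}x_{2}+bx_{2}x_{1}$, Proposition \ref{stable_inner} tells us that $\Phi$ is inner if and only if $\varphi=id_{k}$ and $b=0$. Hence
\[
\tau^{-1}\left(\mathfrak{S}\cap\mathfrak{Y}\right)=\left\{\, aI_{2}\,id_{k}\ :\ a\in k^{\ast}\,\right\},
\]
where $aI_{2}=\left(\begin{array}{cc} a & 0 \\ 0 & a\end{array}\right)$ is the matrix obtained from $g$ by putting $b=0$, and the restriction $a\neq 0$ is forced by the regularity of $g$ (the condition $a\neq\pm b$ reads $a\neq 0$ here). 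This is precisely the set of scalar matrices $k^{\ast}I_{2}$, embedded in $G\leftthreetimes\mathrm{Aut}k$ via $aI_{2}\mapsto(aI_{2})\,id_{k}$.

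Finally I would check that $k^{\ast}I_{2}$ is genuinely a subgroup of $G\leftthreetimes\mathrm{Aut}k$: it contains $I_{2}$, and since $id_{k}$ acts trivially on $G$ one has $(aI_{2})(a'I_{2})=(aa')I_{2}$ and $(aI_{2})^{-1}=a^{-1}I_{2}$, so it is closed under the semidirect-product operations. Because $\tau$ is a group isomorphism by Proposition \ref{str_stb_group}, its restriction to the subgroup $k^{\ast}I_{2}$ is a group isomorphism onto its image, and the computation above identifies that image with $\mathfrak{S}\cap\mathfrak{Y}$; therefore $\mathfrak{S}\cap\mathfrak{Y}\cong k^{\ast}I_{2}$. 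I do not anticipate any genuine obstacle: the substance is already contained in Propositions \ref{stable_inner} and \ref{str_stb_group}, and what remains is only the bookkeeping needed to match the conditions $\varphi=id_{k}$, $b=0$ with the scalar matrices inside the semidirect product.
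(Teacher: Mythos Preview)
Your proposal is correct and follows the same approach as the paper, which simply cites Propositions \ref{stable_inner} and \ref{str_stb_group}; your argument just spells out the bookkeeping that the paper leaves implicit.
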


\begin{proof}
By Propositions \ref{stable_inner} and\ \ref{str_stb_group}.
\end{proof}

\begin{corollary}
$\mathfrak{A/Y\cong }\left( G\mathfrak{/}k^{\ast }I_{2}\right) \mathfrak{%
\leftthreetimes }\mathrm{Aut}k$.
\end{corollary}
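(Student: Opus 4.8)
The plan is to assemble the final corollary from the pieces already in place, using the standard group-theoretic machinery for the decomposition $\mathfrak{A}=\mathfrak{YS}$.

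First I would recall from the decomposition (\ref{decomp}) and the isomorphism theorem that $\mathfrak{A}/\mathfrak{Y}\cong \mathfrak{S}/(\mathfrak{S}\cap\mathfrak{Y})$; this is already stated in the excerpt as $\mathfrak{A/Y\cong S/S\cap Y}$ and follows from the fact that $\mathfrak{Y}$ is normal in $\mathfrak{A}$ together with $\mathfrak{A}=\mathfrak{YS}$. So it suffices to identify the quotient $\mathfrak{S}/(\mathfrak{S}\cap\mathfrak{Y})$.

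Next I would invoke Proposition \ref{str_stb_group}, which gives a concrete isomorphism $\mathfrak{S}\cong G\leftthreetimes\mathrm{Aut}\,k$, and Corollary \ref{intersection}, which identifies the normal subgroup $\mathfrak{S}\cap\mathfrak{Y}$ with $k^{\ast}I_{2}$, the group of regular scalar $2\times 2$ matrices, sitting inside the $G$-factor. The key observation needed here is that under the isomorphism $\tau$ of Proposition \ref{str_stb_group}, the subgroup $k^{\ast}I_{2}$ is a normal subgroup not just of $G$ but of the whole semidirect product $G\leftthreetimes\mathrm{Aut}\,k$: indeed it is normal in $G$ because $G$ is commutative (all matrices $\left(\begin{smallmatrix}a&b\\b&a\end{smallmatrix}\right)$ commute, being simultaneously diagonalizable over $k$ or a quadratic extension), and it is stable under the $\mathrm{Aut}\,k$-action since $\varphi\left(\begin{smallmatrix}\lambda&0\\0&\lambda\end{smallmatrix}\right)=\left(\begin{smallmatrix}\varphi(\lambda)&0\\0&\varphi(\lambda)\end{smallmatrix}\right)\in k^{\ast}I_{2}$.

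Then I would apply the elementary fact that for a semidirect product $G\leftthreetimes Q$ and a normal subgroup $N\trianglelefteq G$ that is $Q$-invariant, one has $(G\leftthreetimes Q)/N\cong(G/N)\leftthreetimes Q$, with $Q$ acting on $G/N$ by the induced action. Taking $N=k^{\ast}I_{2}$ and $Q=\mathrm{Aut}\,k$ yields
\[
\mathfrak{A/Y}\cong\mathfrak{S}/(\mathfrak{S}\cap\mathfrak{Y})\cong(G\leftthreetimes\mathrm{Aut}\,k)/k^{\ast}I_{2}\cong(G/k^{\ast}I_{2})\leftthreetimes\mathrm{Aut}\,k,
\]
which is exactly the claimed statement. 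The main obstacle, such as it is, is the bookkeeping to confirm that $\mathfrak{S}\cap\mathfrak{Y}$ corresponds to a subgroup lying entirely inside the $G$-factor (so that the quotient genuinely splits off only from $G$ and leaves the $\mathrm{Aut}\,k$-factor intact) and that the $\mathrm{Aut}\,k$-action descends well-definedly to $G/k^{\ast}I_{2}$; both of these are immediate from the explicit description of $\tau$ and the action $\varphi\left(\begin{smallmatrix}a&b\\b&a\end{smallmatrix}\right)=\left(\begin{smallmatrix}\varphi(a)&\varphi(b)\\\varphi(b)&\varphi(a)\end{smallmatrix}\right)$ given in Proposition \ref{str_stb_group}. (One may also note, matching the abstract of the paper, that $G\cong U(k\mathbf{S}_2)$ via $\left(\begin{smallmatrix}a&b\\b&a\end{smallmatrix}\right)\mapsto ae+b\tau$ where $\mathbf{S}_2=\{e,\tau\}$, and $k^{\ast}I_2\cong U(k\{e\})$, so the statement is equivalent to $\mathfrak{A/Y}\cong(U(k\mathbf{S}_2)/U(k\{e\}))\leftthreetimes\mathrm{Aut}\,k$.)
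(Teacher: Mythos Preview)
Your proposal is correct and follows essentially the same route as the paper: use $\mathfrak{A}/\mathfrak{Y}\cong\mathfrak{S}/(\mathfrak{S}\cap\mathfrak{Y})$, identify the right-hand side with $(G\leftthreetimes\mathrm{Aut}\,k)/k^{\ast}I_{2}$ via Proposition~\ref{str_stb_group} and Corollary~\ref{intersection}, and then pass to $(G/k^{\ast}I_{2})\leftthreetimes\mathrm{Aut}\,k$ using that $k^{\ast}I_{2}\vartriangleleft G$ and $\varphi(k^{\ast}I_{2})\subset k^{\ast}I_{2}$ for every $\varphi\in\mathrm{Aut}\,k$. Your write-up is in fact more explicit than the paper's (you spell out why $k^{\ast}I_{2}$ is normal in the full semidirect product and state the general lemma on quotients of semidirect products), and your closing remark on $U(k\mathbf{S}_{2})$ matches the paper's paragraph immediately following the corollary.
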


\begin{proof}
By Proposition \ref{str_stb_group} and Corollary 1 we have that $\mathfrak{%
A/Y\cong }\left( G\mathfrak{\leftthreetimes }\mathrm{Aut}k\right) \mathfrak{/%
}k^{\ast }I_{2}$. And we have $\left( G\mathfrak{\leftthreetimes }\mathrm{Aut%
}k\right) \mathfrak{/}k^{\ast }I_{2}\mathfrak{\cong }\left( G\mathfrak{/}%
k^{\ast }I_{2}\right) \mathfrak{\leftthreetimes }\mathrm{Aut}k$ because $%
k^{\ast }I_{2}\vartriangleleft G$ and for every $\varphi \in \mathrm{Aut}k$ $%
\varphi \left( k^{\ast }I_{2}\right) \subset k^{\ast }I_{2}$ fulfills.
\end{proof}

The symmetric group of the set which has $2$ elements - $\mathbf{S}_{\mathbf{%
2}}$ can be embedded in the multiplicative structure of the algebra $\mathbf{%
M}_{\mathbf{2}}\left( k\right) $ of the $2\times 2$ matrices over field $k$: 
$\mathbf{S}_{\mathbf{2}}\ni \left( 12\right) \rightarrow \left( 
\begin{array}{cc}
0 & 1 \\ 
1 & 0%
\end{array}%
\right) \in \mathbf{M}_{\mathbf{2}}\left( k\right) $, so $G\cong U\left( k%
\mathbf{S}_{\mathbf{2}}\right) $, where $U\left( k\mathbf{S}_{\mathbf{2}%
}\right) $ is the group of all invertible elements of the group algebra $k%
\mathbf{S}_{\mathbf{2}}$. Also $k^{\ast }I_{2}\cong U\left( k\left\{
e\right\} \right) $, where $e\in \mathbf{S}_{\mathbf{2}}$, $k\left\{
e\right\} $ is a subalgebra of $k\mathbf{S}_{\mathbf{2}}$, $U\left( k\left\{
e\right\} \right) $ is a group of all invertible elements of this
subalgebra. Therefore $\mathfrak{A/Y\cong }\left( U\left( k\mathbf{S}_{%
\mathbf{2}}\right) \mathfrak{/}U\left( k\left\{ e\right\} \right) \right) 
\mathfrak{\leftthreetimes }\mathrm{Aut}k$, where every $\varphi \in \mathrm{%
Aut}k$ acts on the algebra $k\mathbf{S}_{\mathbf{2}}$ by natural way: $%
\varphi \left( ae+b\left( 12\right) \right) =\varphi \left( a\right)
e+\varphi \left( b\right) \left( 12\right) $.

\section{Example of two linear algebras which are automorphically equivalent
but not\textbf{\ }geometrically equivalent.}

\setcounter{equation}{0}

We take $k=%
\mathbb{Q}
$. $\Theta $ will be the variety of all linear algebras over $k$. $H$ will
be the $2$-generated linear algebra, which is free in the variety
corresponding to the identity $\left( x_{1}x_{1}\right) x_{2}=0$. We
consider the strongly stable automorphism $\Phi $ of the category $\Theta
^{0}$ corresponding to the system of words $W$, where $b\neq 0$. Algebras $H$
and $H_{W}^{\ast }$ are automorphically equivalent by \cite[Theorem 5.1]%
{TsurkovAutomEquiv}.

\begin{proposition}
\label{not_geom_equiv}Algebras $H$ and $H_{W}^{\ast }$ are not geometrically
equivalent.
\end{proposition}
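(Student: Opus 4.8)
The plan is to exhibit a single ideal (a congruence, presented as an ideal $I$ since congruences in a linear algebra variety correspond to ideals) which is $H$-closed but not $H_{W}^{\ast}$-closed, or vice versa. The natural candidate comes directly from the defining identity of $H$: in $F=F(x_{1},x_{2})$ the element $(x_{1}x_{1})x_{2}$ lies in $Id(H,X)$ since $H$ satisfies $(x_{1}x_{1})x_{2}=0$. I would first compute $Id(H,X)$ explicitly — it is the ideal generated by all substitution instances of $(x_{1}x_{1})x_{2}$ in $F(X)$, because $H$ is the \emph{free} algebra of the subvariety defined by that identity, so a congruence on $F(X)$ is $H$-closed precisely when... (here one must be careful: $H$ is only $2$-generated, not the free algebra on all of $X_{0}$, so $Id(H,X)$ for $|X|>2$ need not be just the verbal ideal — but for the example it suffices to work inside $F(x_{1},x_{2})$ and use $T=\{(x_{1}x_{1})x_{2}=0\}$ or a closely related small congruence).

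Concretely, I would take $X=\{x_{1},x_{2}\}$ and the set $T$ consisting of the single equation $x_{1}x_{1}=0$, and compute $T''_{H}$ and $T''_{H_{W}^{\ast}}$. A homomorphism $\mu\colon F\to H$ solves $x_{1}x_{1}=0$ iff $\mu(x_{1})$ squares to zero in $H$; the $H$-closure $T''_{H}$ is the intersection of the kernels of all such $\mu$. The key computation is to identify which words of $F(x_{1},x_{2})$ vanish under every such $\mu$ when the target carries the \emph{original} multiplication, versus the verbal multiplication $\times$ defined by $w_{\cdot}=ax_{1}x_{2}+bx_{2}x_{1}$ with $b\neq 0$. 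Because $H_{W}^{\ast}$ as an abstract algebra is $H$ with a twisted product, the condition ``$h\times h=0$'' becomes $(a+b)h^{2}=0$, i.e. (since $k=\mathbb{Q}$ and $a+b\neq 0$ as $a\neq\pm b$) the \emph{same} set of elements $h$ with $h^{2}=0$; but the resulting closure ideals differ because the further products $(h\times h)\times h'$ versus $(hh')$-type expressions are genuinely different elements of the underlying space. I would pin this down by exhibiting one explicit word $w\in F(x_{1},x_{2})$, built from the degree-$3$ part, that lies in $T''_{H_{W}^{\ast}}$ but not in $T''_{H}$ (or conversely), using that $H$ is free-on-two-generators modulo $(x_{1}x_{1})x_{2}=0$ so its degree-$3$ component is explicitly a known finite-dimensional space and one can just check membership.

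The main obstacle I anticipate is the bookkeeping of the low-degree components of $H$ and $H_{W}^{\ast}$: one must write down bases for $F(x_{1},x_{2})$ in degrees $2$ and $3$, impose the relation $(x_{1}x_{1})x_{2}=0$ and all its consequences under the variety, and then track how the twisted operation $\times$ reshuffles the degree-$3$ monomials $x_{i}x_{j}x_{k}$ (both bracketings). Since $b\neq 0$, an expression like $(x\times x)\times y = (a+b)x^{2}\times y = (a+b)(ax^{2}y + byx^{2})$ involves $yx^{2}=(yx)x$ or $y(xx)$ with a nonzero $b$-coefficient, whereas in $H$ the term $x^{2}y$ with the left-normed bracketing $(xx)y$ is killed by the defining identity — this asymmetry is exactly what should break geometric equivalence. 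So the plan is: (i) record bases of degrees $\le 3$ of $H$; (ii) for the test system $T=\{x_{1}x_{1}=0\}$ describe the solution sets over $H$ and over $H_{W}^{\ast}$; (iii) compute the two closures in degree $\le 3$; (iv) read off a specific element in the symmetric difference, concluding $T''_{H}\neq T''_{H_{W}^{\ast}}$ and hence, by the definition of geometric equivalence, that $H$ and $H_{W}^{\ast}$ are not geometrically equivalent, while they remain automorphically equivalent by the cited \cite[Theorem 5.1]{TsurkovAutomEquiv}.
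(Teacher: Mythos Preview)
Your overall strategy---exhibit a single $T$ with $T''_{H}\neq T''_{H_{W}^{\ast}}$---is sound, and your degree-$3$ computation $(x\times x)\times y=(a+b)(ax^{2}y+byx^{2})$ is exactly the right engine. But the route diverges from the paper's, and your particular choice of test set creates unnecessary work.

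The paper does \emph{not} compute two closures directly. It invokes the bijection $T\mapsto\sigma_{F}T$ from \cite[Remark 5.1]{TsurkovAutomEquiv} between $H_{W}^{\ast}$-closed and $H$-closed sets, observes that it is order-preserving, and concludes that if geometric equivalence held then $\sigma_{F}I=I$ for $I=Id(H,\{x_{1},x_{2}\})$. One then checks $\sigma_{F}((x_{1}x_{1})x_{2})=a(a+b)(x_{1}x_{1})x_{2}+b(a+b)x_{2}(x_{1}x_{1})$, whose right-bracketed term lies outside $I/F^{4}$ (which is spanned by left-bracketed monomials $(x_{i}x_{j})x_{k}$), so $\sigma_{F}I\neq I$. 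The advantage is that one never has to describe $\mathrm{Hom}(F,H_{W}^{\ast})$ or identify square-zero elements of $H$.

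Your direct approach actually becomes \emph{simpler} than the paper's if you drop $T=\{x_{1}x_{1}=0\}$ and revert to what your first paragraph was heading toward: take $T=\{0\}$, so that $T''_{H}=Id(H,X)$ and $T''_{H_{W}^{\ast}}=Id(H_{W}^{\ast},X)$. Then $(x_{1}x_{1})x_{2}\in Id(H,X)$ trivially, while for the free generators $h_{1},h_{2}$ of $H$ one has $(h_{1}\times h_{1})\times h_{2}=(a+b)b\cdot h_{2}(h_{1}h_{1})\neq 0$ (the degree-$3$ part of the verbal ideal of $(x_{1}x_{1})x_{2}$ is spanned by left-bracketed monomials only, so $h_{2}(h_{1}h_{1})$ survives in $H$). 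Hence $(x_{1}x_{1})x_{2}\notin Id(H_{W}^{\ast},X)$ and you are done in one line.

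By contrast, your chosen $T=\{x_{1}x_{1}=0\}$ forces you to understand the locus $\{h\in H:h^{2}=0\}$, which (since no nonzero degree-$1$ element of $H$ has square zero) pushes the witnessing homomorphisms into degree $\geq 2$ and makes step~(ii) of your plan genuinely delicate; you have not shown that this particular $T$ actually separates the two closures, and it is not obvious that it does without further work. So: keep your computation, but use the trivial $T$.
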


\begin{proof}
Let $F=F\left( x_{1},x_{2}\right) $. The ideal $I=Id\left( H,\left\{
x_{1},x_{2}\right\} \right) $ of the all two-variables identities which are
fulfill in the algebra $H$ will be the smallest $H$-closed set in $F$,
because $I=\left( 0\right) _{H}^{\prime \prime }$, where $0\in F$. If
algebras $H$ and $H_{W}^{\ast }$ are geometrically equivalent then the
structures of the $H$-closed sets and of the $H_{W}^{\ast }$-closed sets in $%
F$ coincide. Hence $I$ must be the smallest $H_{W}^{\ast }$-closed set in $F$%
.

By \cite[Remark 5.1]{TsurkovAutomEquiv} 
\begin{equation}
T\rightarrow \sigma _{F}T  \label{closed_bijection}
\end{equation}%
is a bijection from the structure of the $H_{W}^{\ast }$-closed sets in $F$
to the structure of the $H$-closed sets in $F$. Hear $\sigma
_{F}:F\rightarrow F_{W}^{\ast }$ is an isomorphism from condition Op2. It is
clear that the bijection (\ref{closed_bijection}) preserves inclusions of
sets. So it transforms the smallest $H_{W}^{\ast }$-closed set to the
smallest $H$-closed set, i. e. $I=\sigma _{F}I$ must fulfills.

It is obviously that $I\subset F^{3}$. By (\ref{epi_homo}) $\sigma
_{F}I\subset F^{3}$. We will compare the linear subspaces $I/F^{4}$ and $%
\left( \sigma _{F}I\right) /F^{4}$. $I=\left\langle \alpha \left( \left(
x_{1}x_{1}\right) x_{2}\right) \mid \alpha \in \mathrm{End}F\right\rangle $.
Let $\alpha \left( x_{i}\right) \equiv \alpha _{1i}x_{1}+\alpha
_{2i}x_{2}\left( \func{mod}F^{2}\right) $, where $i=1,2$, $\alpha _{ji}\in k$%
. Then%
\[
\alpha \left( \left( x_{1}x_{1}\right) x_{2}\right) \equiv \left( \left(
\alpha _{11}x_{1}+\alpha _{21}x_{2}\right) \left( \alpha _{11}x_{1}+\alpha
_{21}x_{2}\right) \right) \left( \alpha _{12}x_{1}+\alpha _{22}x_{2}\right)
\left( \func{mod}F^{4}\right) .
\]%
We achieve after the extending of brackets that $I/F^{4}$ is a subspace of
the linear space spanned by the elements of $F^{3}/F^{4}$ which have form $%
\left( x_{i}x_{j}\right) x_{k}+F^{4}$, where $i,j,k=1,2$. But%
\[
\sigma _{F}I\ni \sigma _{F}\left( \left( x_{1}x_{1}\right) x_{2}\right)
=a\sigma _{F}\left( x_{1}x_{1}\right) \sigma _{F}\left( x_{2}\right)
+b\sigma _{F}\left( x_{2}\right) \sigma _{F}\left( x_{1}x_{1}\right) =
\]%
\[
=a\left( a+b\right) \left( x_{1}x_{1}\right) x_{2}+b\left( a+b\right)
x_{2}\left( x_{1}x_{1}\right) .
\]%
We have that $a+b\neq 0$, $b\neq 0$, so $I/F^{4}\neq \left( \sigma
_{F}I\right) /F^{4}$ and $I\neq \sigma _{F}I$. This contradiction proves
that algebras $H$ and $H_{W}^{\ast }$ are not geometrically equivalent.
\end{proof}

\end{document}